\def\bt{\begin{thm}}
\def\et{\end{thm}}
\def\bl{\begin{lem}}
\def\el{\end{lem}}
\def\bd{\begin{defi}}
\def\ed{\end{defi}}
\def\bc{\begin{cor}}
\def\ec{\end{cor}}
\def\bp{\begin{proof}}
\def\ep{\end{proof}}
\def\br{\begin{rem}}
\def\er{\end{rem}}
\newtheorem{thm}{Theorem}[section]
\newtheorem{prop}[thm]{Proposition}
\newtheorem{lem}[thm]{Lemma}
\newtheorem{defn}[thm]{Definition}
\newtheorem{example}[thm]{Example}
\newtheorem{rem}[thm]{Remark}
\newtheorem{cor}[thm]{Corollary}
\numberwithin{equation}{section}
\newcommand{\C}{\Bbb{C}}
\newcommand{\Z}{\widetilde{Z}}
\newcommand{\cohomology}{H^{1,1}(X,\mathbb{R})}
\newcommand{\la}{\langle}
\newcommand{\ra}{\rangle}
\newcommand{\U}{\mathscr{U}}
\newcommand{\Ck}{\mathscr{C}_k}
\newcommand{\Cmk}{\mathscr{C}_{m-k+1}}
\newcommand{\ommk}{\omega^{m-k+1}}
\newcommand{\p}{\Bbb{P}}
\newcommand{\prob}{{\bf{P}}}
\newcommand{\probn}{{\bf{P}_n}}
\newcommand{\bthm}{\begin{thm}}
\newcommand{\ethm}{\end{thm}}
\newcommand{\bstp}{\begin{stp}}
\newcommand{\estp}{\end{stp}}
\newcommand{\blemma}{\begin{lemma}}
\newcommand{\elemma}{\end{lemma}}
\newcommand{\bprop}{\begin{prop}}
\newcommand{\eprop}{\end{prop}}
\newcommand{\bpf}{\begin{pf}}
\newcommand{\epf}{\end{pf}}
\newcommand{\bdefn}{\begin{defn}}
\newcommand{\edefn}{\end{defn}}
\newcommand{\brk}{\begin{rmrk}}
\newcommand{\erk}{\end{rmrk}}
\newcommand{\bcrl}{\begin{crl}}
\newcommand{\ecrl}{\end{crl}}
\title{Equidistribution of zeros of random holomorphic sections}
\author{Turgay Bayraktar}
\date{\today}
\address{Mathematics Department, Syracuse University, NY, 13205 USA}
\email{tbayrakt@syr.edu}
\keywords{Random polynomial, universality, zeros of holomorphic sections, positive line bundle, equilibrium measure}
\subjclass[2000]{32A60, 32L10, 60D05}
\begin{document}

\begin{abstract}
We study asymptotic distribution of zeros of  random holomorphic sections of high powers of positive line bundles defined over projective homogenous manifolds. We work with a wide class of  distributions that includes real and complex Gaussians. 
As a special case, we obtain asymptotic zero distribution of multivariate complex polynomials given by linear combinations of orthogonal polynomials with i.i.d. random coefficients. Namely, we prove that normalized zero measures of m i.i.d random polynomials, orthonormalized on a regular compact set $K\subset \Bbb{C}^m,$ are almost surely asymptotic to the equilibrium measure of $K$.
\end{abstract}

\maketitle
\section{Introduction}
 In this paper we study limit distribution of zeros of random holomorphic sections of high powers $L^{\otimes n}$ of a positive holomorphic line bundle $L$ defined over a projective manifold $X.$ 
 Given a regular compact set $K\subset X,$ a continuous weight function $q:K\to \Bbb{R}$ and a smooth positively curved hermitian metric $h$ on $L\to X$ one can define a scalar $L^2$-product on $H^0(X,L^{\otimes n})$ (see \S \ref{inner} for details).  For a fixed orthonormal basis of $H^0(X,L^{\otimes n})$ with respect to this $L^2$-product, we consider linear combinations in which coefficients are i.i.d. real or complex random variables. Multivariate polynomials with random coefficients (with respect to a suitable orthonormal basis) as well as their spherical counterpart (spherical harmonics) arise as a special case for the appropriate choice of $L\to X.$ Zero distribution of random holomorphic sections with i.i.d. Gaussian coefficients was studied by Shiffman and Zelditch \cite{SZ,SZ1,SZ2}. The setup of the later papers correspond here to the special case $K=X$ and $q=0.$ In the present setting, we allow coefficients to be i.i.d random variables of bounded density with logarithmically decaying tails (see (\ref{h2})). Our main result (Theorem \ref{main}) asserts that in any codimension normalized zero currents of i.i.d. random holomorphic sections are almost surely asymptotic to external powers $T_{K,q}\wedge\dots \wedge T_{K,q}$ of an extremal current $T_{K,q}$ associated to $(L,h,K,q).$ In particular, when $K=X$ and $q=0$ the current $T_{K,q}$ coincides with the curvature form $\omega=c_1(h)$ of the smooth positively curved hermitian metric $h$ on $L,$ hence we recover \cite[Theorem 1]{SZ}. Therefore, these results can also be considered as a global universality of distribution of zeros in the sense that they extend earlier known results in the setting of Gaussian random holomorphic sections (as well as polynomials) to a more general setting.    \\ \indent
  If $X:=\p^m$ is the complex projective space and $L$ is the hyperplane bundle $\mathcal{O}(1)\to \p^m$ with a suitable choice of a hermitian metric (see \S \ref{BMp}) the present geometric setting reduces to random multivariable holomorphic polynomials. A classical result due to Kac and Hammersley \cite{Kac,Ham} asserts that normalized zeros of Gaussian random univariate polynomials of large degree tend to accumulate on the unit circle. Recently, Ibragimov and Zaporozhets \cite{IZ} provided a necessary and sufficient condition for global universality of zero distribution for Kac ensembles (see (\ref{ic})). More generally, zero distribution of random multivariate polynomials has been studied in  \cite{Bloom1,BloomS,Shiffman} and it was proved that normalized simultaneous zero measures of i.i.d. Gaussian random polynomial systems, orthonormalized on a regular compact set $K\subset\Bbb{C}^m,$ is asymptotic to the equilibrium measure of $K$. However, beyond the Gaussian ensembles not much is known about the asymptotic zero distribution of random polynomials in higher dimensions. 
More recently, Bloom and Levenberg \cite{BloomL} studied this problem for absolutely continuous distributions that have polynomially decaying tails and they proved that expected normalized zero measures of i.i.d. random polynomials converge to equilibrium measure of $K$ as their degree grow. Moreover, they posed the almost everywhere convergence of normalized zero measures of i.i.d random polynomials to the expected distribution as an open problem. We address this question in  the affirmative for a more general class of distributions that have logarithmically decaying tails (Theorem \ref{poly}).\\ \indent
 Recently, authors of \cite{CMM} obtained another generalization of \cite[Theorem 5.2]{BloomL}. Namely, for a given bounded positive singular hermitian metric $h=e^{-\varphi}$ it is proved in \cite[Corollary 5.6]{CMM} that normalized zero currents of Gaussian random holomorphic sections in $H^0(X,L^{\otimes n})$ converge almost surely to the curvature current $\omega+dd^c\varphi.$ We remark that the scalar $L^2$-product used in \cite{CMM} is different than the one used in \cite{BloomL} and this paper.
  \subsection{The setting}\label{set} Let $X$ be a projective manifold of complex dimension $m$ and $L\to X$ be a positive holomorphic line bundle. We also let $\omega$ be a smooth positive $(1,1)$ form representing $c_1(L),$ the first Chern class of $L$. Recall that an usc function $\varphi\in L^1(X)$ is called $\omega$-psh if $\omega+dd^c\varphi\geq0$ in the sense of currents. We denote the set of all $\omega$-psh functions by $PSH(X,\omega)$.\\ \indent 
 Given a non-pluripolar compact set $K\subset X$ and a continuous weight function $q:K\to \Bbb{R}$  \textit{weighted global extremal function} $V_{K,q}^*$ of $K$ is defined to be usc regularization of 
 \begin{equation}\label{def}
 V_{K,q}:=\sup\{\varphi\in PSH(X,\omega):  \varphi\leq q\ \text{on}\ K\}.
 \end{equation}
By definition we say that $(K,q)$ is a \textit{regular weighted compact set} if $V_{K,q}$ is continuous on $X.$ Throughout this work, we assume that $V_{K,q}$ is continuous and hence $V_{K,q}=V_{K,q}^*.$ Recall that a sufficient condition for continuity of $V_{K,q}$ is local regularity of $K$ (\cite{Siciak,BBN}). It is well-known that $T_{K,q}:=\omega+dd^c V_{K,q}$ defines a positive closed $(1,1)$ current representing the class $c_1(L)$ in $\cohomology.$ Since $T_{K,q}$ has locally bounded potentials, it follows from Bedford-Taylor theory \cite{BT2} that the exterior powers $T_{K,q}^k:=T_{K,q}\wedge\dots\wedge T_{K,q}$ (k-times) are well defined positive closed bidegree $(k,k)$ currents for each $1\leq k\leq \dim X.$ In particular, the top degree self-intersection (after normalizing) defines a probability measure on $X.$
 
Given a measure $\tau$ on $K$ and a smooth positively curved metric $h$ on $L,$ one can define an $L^2$-norm on $H^0(X,L^{\otimes n})$ by $$\|s\|_{L^2_{q,\tau}}:=\big(\int_K\|s(x)\|_{h_n}^2e^{-2nq(x)}d\tau(x)\big)^{\frac12}.$$ 
We fix an orthonormal basis $\{S^n_j\}_{j=1}^{d_n}$ for $H^0(X,L^{\otimes n})$ induced by this norm. Throughout this paper, we assume that the point-wise norm of restriction of Bergman kernel to the diagonal 
$$ \|S_n(x,x)\|_{h_n}:=\sum_{j=1}^{d_n}\|S_j^n(x)\|^2_{h_n}$$
has sub-exponential growth (see \S \ref{BM} for details). Such measures $\tau$, called  Bernstein-Markov (BM) measures in the literature, and they always exist on regular weighted compact sets (\cite{NgZ,BBN}).  We remark that although the inner product on $H^0(X,L^{\otimes n})$ depends on the choice of the Bernstein-Markov measure $\tau,$ the limiting distribution of zeros does not depend on it (see Theorem \ref{main}).

Every $s_n\in H^0(X,L^{\otimes n})$ can be uniquely written as $$s_n=\sum_{j=1}^{d_n}a^{(n)}_jS^n_j.$$ We assume that $a^{(n)}_j$ are real or complex valued i.i.d random variables whose distribution law is of the form $\prob:=\phi(z) d\lambda(z)$ satisfying 
\begin{equation}\label{h1}
0\leq \phi(z)\leq C\ \text{for some}\ C>0
\end{equation}
\begin{equation}\label{h2}
\prob\{z\in\C:\log |z|>R\}= O(R^{-\rho})\ \text{as}\ R\to \infty
\end{equation}
 where $\lambda$ is the Lebesgue measure on $\C$ and $\rho>\dim_{\Bbb{C}} X+1.$ If $\phi$ is a function defined on real numbers then we replace $\phi(z)d\lambda$ by $\phi(x)dx.$ We remark that our setting includes standard real and complex Gaussian distributions. 
  \\ \indent We identify $\mathcal{S}_n=H^0(X,L^{\otimes n})$ with $\Bbb{C}^{d_n}$ where $d_n:=h^0(X,L^{\otimes n})$ and endow it with the $d_n$-fold product probability measure $\mu_n$ induced by $\prob$. We also consider $\mathcal{S}_{\infty}:=\prod_{n=1}^{\infty}\mathcal{S}_n$ as a probability space endowed with the product measure $\mu:=\prod_{n=1}^{\infty} \mu_n.$ 
 For a system $S^k_n=(s_n^1,s_n^2,\dots,s_n^k)\in \mathcal{S}_n^k$ of i.i.d. random holomorphic sections with $1\leq k\leq m,$ we denote their common zero locus by
 $$Z_{S^k_n}:=\{x\in X: s_n^1(x)=\dots=s_n^k(x)=0\}$$ and define the normalized zero currents $$\Z_{S^k_n}:=\frac{1}{n^k}[Z_{S^k_n}]$$ where $[Z_{S^k_n}]$ denotes the current of integration along the variety $Z_{S^k_n}.$ Then the \textit{expected zero current} is defined  by 
 $$\la \Bbb{E}[\Z_{S^k_n}],\Phi\ra:=\int_{\mathcal{S}_n^k}\la \Z_{S^k_n},\Phi\ra d\mu_n^{k}(S^k_n)$$
 where $\Phi$ is a bidegree $(m-k,m-k)$ test form on $X$ and $\mu_n^{k}=\mu_n\times\dots\times \mu_n$ is the $k$-fold product measure. 
  \begin{thm}\label{main}
Let $L\to X$ be a positive holomorphic line bundle over a projective manifold $X$ and $(K,q)$ be a regular weighted compact set. Then for $1\leq k\leq \dim_{\Bbb{C}} X$
$$\Bbb{E}[\Z_{S^k_n}]\to T_{K,q}^k$$
in the sense of currents as $n\to \infty.$ Moreover, if the ambient space $X$ is complex homogeneous then almost surely 
$$\Z_{S^k_n}\to T_{K,q}^k$$ in the sense of currents as $n\to \infty.$
 \end{thm}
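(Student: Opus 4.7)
My plan is to reduce the problem via the Poincar\'e--Lelong formula to the convergence of random $\om$-plurisubharmonic potentials. For a single random section $s_n = \sum_j a^{(n)}_j S^n_j$, one has
\[
\Z_{s_n} \;=\; \om + \tfrac{1}{n}dd^c u_n, \qquad u_n(x) := \tfrac{1}{n}\log\|s_n(x)\|_{h_n},
\]
and at each point a direct computation decomposes $u_n$ as
\[
u_n(x) \;=\; \tfrac{1}{2n}\log\|S_n(x,x)\|_{h_n} + \tfrac{1}{n}\log|\la a^{(n)}, v_n(x)\ra|,
\]
where $v_n(x)\in\C^{d_n}$ is the unit vector whose coordinates are the normalized evaluation coefficients of the basis $\{S^n_j\}$ at $x$ (in a local frame). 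The Bernstein--Markov hypothesis together with standard Bergman kernel asymptotics for smooth positive metrics imply that the first term converges uniformly on $X$ to $V_{K,q}(x)$. Hence the entire analysis reduces to controlling the random remainder $\tfrac{1}{n}\log|\la a^{(n)},v_n(x)\ra|$.

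For the expected value statement, the key is the uniform estimate
\[
\sup_{\|v\|=1}\; \Bbb{E}\big[\,\big|\log|\la a^{(n)},v\ra|\big|\,\big] \;=\; O(\log d_n).
\]
The bounded-density hypothesis (\ref{h1}) yields $\prob\{|\la a, v\ra|<\varepsilon\}=O(\varepsilon^2)$, giving a uniform lower bound on $\Bbb{E}[\log|\la a,v\ra|]$; the tail hypothesis (\ref{h2}) provides the matching upper bound. Since $d_n=O(n^m)$, dividing by $n$ makes this error tend to $0$, so $\Bbb{E}[u_n]\to V_{K,q}$ uniformly on $X$. Applying $dd^c$ yields $\Bbb{E}[\Z_{s_n}]\to T_{K,q}$ for $k=1$; independence of the $k$ sections together with continuity of $V_{K,q}$ and Bedford--Taylor continuity of wedge products then upgrade this to $\Bbb{E}[\Z_{S^k_n}]\to T_{K,q}^k$.

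For the almost sure statement the crux is again the $k=1$ case, where I would show $u_n\to V_{K,q}$ in $L^1(X)$ almost surely. Given the uniform convergence of the Bergman term, this amounts to
\[
\tfrac{1}{n}\int_X\big|\log|\la a^{(n)},v_n(x)\ra|\big|\,dV(x) \;\to\; 0 \quad \text{almost surely.}
\]
For the upper contribution I use $|\la a,v\ra|\le d_n^{1/2}\max_j|a^{(n)}_j|$; the tail hypothesis (\ref{h2}) with $\rho>m+1$ combined with $d_n=O(n^m)$ is precisely what makes $\prob\{\tfrac{1}{n}\log\max_j|a^{(n)}_j|>\varepsilon\}$ summable in $n$, so Borel--Cantelli yields $\tfrac{1}{n}\log\max_j|a^{(n)}_j|\to 0$ almost surely. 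For the lower contribution, integrating the density bound from (\ref{h1}) over $x\in X$ and applying a Chebyshev-type estimate gives summable tails for the integral deviating from its mean, again allowing Borel--Cantelli.

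The main obstacle is passing from the almost sure $L^1$ convergence of the single-section potentials $u_n^{(i)}$ to almost sure convergence of the wedge products $\bigwedge_{i=1}^k(\om+dd^c u^{(i)}_n)\to T_{K,q}^k$ along independent sequences. Bedford--Taylor continuity requires more than $L^1$ convergence of the potentials, and this is where the complex homogeneous assumption enters: the transitive holomorphic group action yields uniform-in-$x$ off-diagonal Bergman kernel estimates and allows one to upgrade $L^1$ convergence to convergence in capacity by comparing values of $u_n^{(i)}$ at arbitrary points via isometries. Once convergence in capacity is established, Bedford--Taylor continuity applies because the limit $V_{K,q}$ is continuous, and the joint wedge convergence follows along the lines of Shiffman--Zelditch, adapted to the present non-Gaussian setting.
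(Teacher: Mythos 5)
Your $k=1$ arguments are broadly workable and close in spirit to what the paper does for the expectation, but two quantitative claims need repair. With the logarithmically decaying tails (\ref{h2}) the bound $\sup_{\|v\|=1}\Bbb{E}\big[|\log|\la a^{(n)},v\ra||\big]=O(\log d_n)$ is not available — the tails are far too heavy; what one actually gets (and what Lemma \ref{help} proves) is $O(n^{1-\epsilon})$, which is still $o(n)$ and suffices. For the almost sure lower-tail control, a first-moment Chebyshev bound only gives $\prob\{Y_n>\varepsilon\}=O(1/(n\varepsilon))$, which is not summable; you need the second moment of $\log^-|\la a^{(n)},v_n(x)\ra|$ (the small-ball estimate from (\ref{h1}) gives it, of size $O((\log d_n)^2)$) before Borel--Cantelli applies. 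Also, the identity $\Bbb{E}[\Z_{S^k_n}]=\Bbb{E}[\Z_{s^1_n}]\wedge\dots\wedge\Bbb{E}[\Z_{s^k_n}]$ does not follow from independence alone, since wedge products of currents are not continuous bilinear operations; the paper proves it by induction with a Bertini slicing argument together with uniform error estimates (Corollary \ref{PL}), and only then invokes Bedford--Taylor convergence for the Bergman forms $\alpha_n^k$.

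The genuine gap is the almost sure statement for $k\geq 2$, which is the heart of the theorem. Your mechanism — upgrade a.s. $L^1$ convergence of the potentials to convergence in capacity ``via isometries'' coming from homogeneity, then apply Bedford--Taylor — does not work as described. The inner product (\ref{inner}) depends on $(K,q,\tau)$ and is not invariant under $Aut(X)$, so homogeneity produces no isometries comparing values of $u_n^{(i)}$ at different points and no invariance of the Bergman kernel; and even granting convergence in capacity, the potentials $\frac1n\log\|s_n^{(i)}\|_{h_n}$ are unbounded below, so the capacity-convergence theorems for Monge--Amp\`ere products do not apply directly — controlling the mass of $\Z_{s_n^1}\wedge\dots\wedge\Z_{s_n^k}$ near the moving common zero sets is exactly the difficulty being swept under the rug. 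In the paper homogeneity enters for an entirely different reason: it guarantees the Dinh--Sibony kernel solving the $dd^c$-equation with quantitative $\mathscr{L}^r$ estimates (Theorem \ref{DS}), which makes the super-potential formalism available in bidegree $(k,k)$. The a.s. convergence for $k\geq 2$ is then obtained by induction on $k$: a variance estimate for $X_n=\U_{\Z_{S^k_n}}(R)$ (Lemma \ref{var2}, itself proved by slicing), Kolmogorov's strong law plus the density-one subsequence argument (Lemma \ref{walters}), the domination $\limsup_{n}\U_{\Z_{S^k_n}}\leq \U_{T^k_{K,q}}$ coming from the extremal property and the wedge formula (\ref{wedge}), continuity of $\U_{T^k_{K,q}}$ (Proposition \ref{cont}), and the fact that super-potentials determine currents (Proposition \ref{determine}). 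Without an argument of this type, or some genuine substitute controlling the mixed wedge products of random currents, your outline does not establish the higher-codimension almost sure convergence.
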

  The proof of Theorem \ref{main} is based on induction on bidegrees. To prove almost everywhere convergence for $k=1,$ we use Bergman kernel asymptotics together with Kolmogorov's strong law of large numbers. The later requires a variance estimate (Lemma \ref{var1}). To this end we make use of exponential estimates for qpsh functions which can be considered as a global version of uniform Skoda integrability theorem. Finally, we use extremal property of $V_{K,q}$ to dominate quasi-potentials of limit points of random sequence of zero currents $\{\Z_{s_n}\}_{n\geq1}.$ In higher bidegrees, we work with super-potentials of positive closed currents. Recall that the super-potentials of positive closed currents were introduced by Dinh and Sibony \cite{DS11,DS10} which extends the notion of quasi-potentials of positive closed $(1,1)$ currents.\\ \indent
Recall that the randomization in \cite{SZ} is obtained by taking $K=X$ and endowing $\Bbb{P}H^0(X,L^{\otimes n})$ with the Fubini-Study volume form. In particular, if the ambient space is homogenous our results generalizes that of \cite{SZ,Shiffman}. Note that our proof is partly based on resolution of $\partial\bar{\partial}$-equations with qualitative estimates (see Theorem \ref{DS}) which requires the ambient space to be homogenous. It would be interesting to know if one can prove such equidistribution results on arbitrary projective manifolds.  More recently, Dinh and Sibony \cite{DS3} studied this problem by endowing $\Bbb{P}H^0(X,L^{\otimes n})$ with moderate measures. Recall that  Monge-Amp\`ere measure of a H\"{o}lder continuous qpsh function is among the examples of moderate measures (see \cite{DNS} for details).

\subsection{Multivariate orthogonal polynomial ensembles}\label{BMp}
 In this section we explain how multivariate complex polynomials arise as a special case in the above general geometric setting. Let $K\subset \Bbb{C}^m$ be a compact set and $q:K\to\Bbb{R}$ be a continuous function. Recall that the \textit{weighted global extremal function} $V_{K,q}^*$ is defined as usc regularization of
  $$V_{K,q}:=\sup\{u\in Psh(\Bbb{C}^m):u\leq q\ \text{on}\ K\ \text{and}\ u(z)\leq\log^+\|z\|+O(1)\ \text{as}\ \|z\|\to \infty\}.$$
  In the unweighted case (i.e. $q\equiv0$), we write $V_K$ for short. We say that $K$ is \textit{regular} if $V_K$ is continuous on $\Bbb{C}^m.$ A set $K$ is called \textit{locally regular} at $z$ if $K\cap B(z,r)$ is regular for every $r>0$ where $B(z,r)$ denotes the ball centered at $z$ with radius $r.$ If $K$ is locally regular at every $z\in K$ we say that $K$ is locally regular. Polydisc and round sphere in $\Bbb{C}^m$ are among the examples of locally regular compact sets. It follows from \cite[Prop. 2.16]{Siciak} that  for a locally regular compact set $K$, the function $V_{K,q}$ is continuous for every continuous weight $q$ and hence $V_{K,q}=V_{K,q}^*$.
Then  by Bedford-Taylor theory \cite{BT2} the exterior powers 
    $$(\frac{i}{\pi}\partial\overline{\partial}V_{K,q})^k:=\frac{i}{\pi}\partial\overline{\partial}V_{K,q}\wedge\dots \wedge \frac{i}{\pi}\partial\overline{\partial}V_{K,q}$$ 
    are well defined positive closed bidegree $(k,k)$ currents for $1\leq k\leq m$. In particular, the top degree intersection $\mu_{K,q}:=(\frac{i}{\pi}\partial\overline{\partial}V_{K,q})^m$ is a probability measure. We write $\mu_K$ for short when $q\equiv0.$ The measure $\mu_K$ is called the \textit{equilibrium measure} of $K$ in the literature. We refer the reader to the text \cite{SaffTotik} for details and background on weighted pluripotential theory (see  \cite[Appendix B]{SaffTotik} for multivariate case). \\ \indent
Recall that complex projective space $X:=\p^m$ is defined as the quotient $\Bbb{C}^{m+1}-\{0\}/\Bbb{C}^*$ and its elements are represented by homogenous coordinates $[Z_0:\dots:Z_{m}]$ where $(Z_0,\dots,Z_{m})\to [Z_0:\dots:Z_{m}]$ is the standard projection $\pi:\Bbb{C}^{m+1}-\{0\} \to \p^m.$ The fibers of $\pi$ are complex lines in $\Bbb{C}^{m+1}$ and hence defines a line bundle (called tautological bundle) over $\p^m.$ The dual of this line bundle is called hyperplane bundle which we denote by $L:=\mathcal{O}(1).$ Then we can identify $\Bbb{C}^m$ with the affine piece $U_0=\{Z_0\not=0\}$ via the embedding $z\to [1:z].$ Note that by definition of $\mathcal{O}(1)$ homogenous coordinates $Z_i$ define sections of $\mathcal{O}(1).$ As a result we may identify $H^0(\p^m,\mathcal{O}(n))$ with homogenous polynomials in $m+1$ variables of degree $n.$ In particular, restricting them on the affine piece $\Bbb{C}^m\simeq U_0,$ the space $H^0(\p^m,\mathcal{O}(n))$ gets identified with the space of polynomials on $\Bbb{C}^m$ of degree at most $n.$ Next, we endow $\mathcal{O}(1)$ with the Fubini-Study metric $h:=h_{FS}$ which can be represented by the weight function $\frac12\log (1+\|z\|^2)$ on the affine piece $\Bbb{C}^m.$ We also denote the Chern form of $h_{FS}$ by $\omega_{FS}.$ Then the function $V_{K,q}-\frac12\log(1+\|z\|^2)$ extends uniquely to a $\omega_{FS}$-psh function on $\p^m$ and the extension coincides with (\ref{def}) (see \cite{GZ}). Hence, applying Theorem \ref{main} we obtain:
\begin{thm}\label{poly}
Let $K\subset \Bbb{C}^m$ be a locally regular compact set, $q:K\to \Bbb{R}$ be a continuous weight  function. Then for $1\leq k\leq m$
 $$\Bbb{E}[\Z_{f_n^1,\dots,f_n^k}] \to (\frac{i}{\pi}\partial\overline{\partial}V_{K,q})^k$$
 in the sense of currents as $n\to \infty.$ Moreover, almost surely
 $$\Z_{f_n^1,\dots,f_n^k} \to (\frac{i}{\pi}\partial\overline{\partial}V_{K,q})^k$$
in the sense of currents as $n\to \infty.$ 
\end{thm}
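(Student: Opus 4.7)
The plan is to deduce Theorem \ref{poly} from Theorem \ref{main} by specializing the geometric setup to $X=\p^m$, $L=\mathcal{O}(1)$, and $h=h_{FS}$ the Fubini-Study metric, as already signaled in \S \ref{BMp}. Three checks suffice: (i) the coefficient randomization of degree $\le n$ polynomials on $\C^m$ matches the one for sections of $\mathcal{O}(n)$ in the abstract setup; (ii) $\p^m$ is complex homogeneous so that the almost sure part of Theorem \ref{main} applies; (iii) the extremal current $T_{K,q}$ associated to $(\mathcal{O}(1),h_{FS},K,q)$ equals $\frac{i}{\pi}\partial\overline{\partial}V_{K,q}$ on the affine chart $U_0\simeq \C^m$, where $V_{K,q}$ is the weighted $\C^m$-extremal function of \S \ref{BMp}.

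For (i), the homogeneous coordinates provide the standard identification of $H^0(\p^m,\mathcal{O}(n))$ with polynomials of degree $\le n$ on $U_0$. Given a Bernstein-Markov measure $\tau$ on $(K,q)$ --- which exists on locally regular weighted compact sets by \cite{NgZ,BBN} --- the $L^2_{q,\tau}$ inner product produces an orthonormal basis of $H^0(\p^m,\mathcal{O}(n))$, so that the i.i.d.\ coefficient randomization (\ref{h1})--(\ref{h2}) matches exactly that of \S \ref{set}. For (ii), the transitive holomorphic action of $PGL(m+1,\C)$ on $\p^m$ is classical. The key identification is (iii): according to the discussion at the end of \S \ref{BMp}, the function $V_{K,q}-\tfrac12\log(1+\|z\|^2)$ extends uniquely to an $\om_{FS}$-psh function $\widetilde V_{K,q}$ on $\p^m$ that coincides with the extremal function defined in (\ref{def}). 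Since $\om_{FS}=\tfrac{i}{\pi}\partial\overline{\partial}\bigl(\tfrac12\log(1+\|z\|^2)\bigr)$ on $U_0$, we obtain on this chart
$$T_{K,q}=\om_{FS}+dd^c\widetilde V_{K,q}=\tfrac{i}{\pi}\partial\overline{\partial}V_{K,q}.$$
Local regularity of $K$ ensures continuity of $V_{K,q}$ via \cite[Prop.~2.16]{Siciak}, whence $\widetilde V_{K,q}$ is continuous on $\p^m$ and the Bedford-Taylor wedge powers $T_{K,q}^k$ are well defined, with the above identity passing to $k$-fold wedges on $U_0$.

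With (i)--(iii) in place, the hypotheses of Theorem \ref{main} are met, and its conclusions read, on the affine chart,
$$\Bbb{E}[\Z_{f_n^1,\dots,f_n^k}]\to \bigl(\tfrac{i}{\pi}\partial\overline{\partial}V_{K,q}\bigr)^k,\qquad \Z_{f_n^1,\dots,f_n^k}\to \bigl(\tfrac{i}{\pi}\partial\overline{\partial}V_{K,q}\bigr)^k\ \text{almost surely,}$$
in the sense of currents, which is exactly Theorem \ref{poly}. The main conceptual point of the reduction is the matching of the $\C^m$-extremal function with the abstract $\p^m$-extremal function in (iii); this is the only place where a nontrivial external input (\cite{GZ}) is used, and the remaining verifications reduce to tracing through the definitions.
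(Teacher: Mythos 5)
Your proposal is correct and follows essentially the same route as the paper: the paper proves Theorem \ref{poly} precisely by specializing Theorem \ref{main} to $X=\p^m$, $L=\mathcal{O}(1)$ with the Fubini--Study metric, using the identification of $H^0(\p^m,\mathcal{O}(n))$ with polynomials of degree at most $n$ and the fact (from \cite{GZ}) that $V_{K,q}-\frac12\log(1+\|z\|^2)$ extends to the $\om_{FS}$-psh extremal function of (\ref{def}), so that $T_{K,q}=\frac{i}{\pi}\partial\overline{\partial}V_{K,q}$ on the affine chart. Your checks (i)--(iii) are exactly the verifications implicit in the paper's reduction, including homogeneity of $\p^m$ for the almost-sure statement.
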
 
   Finally, we remark that conditions (\ref{h1}) and (\ref{h2}) with $\rho>2$ are not optimal in the case of $X=\Bbb{P}^1$ and  $K=S^1$ the unit circle in $\Bbb{C}$ with $q\equiv 0.$ It was observed in \cite{IZ} that the normalized zero measures $\Z_{f_n}$ of i.i.d random polynomials $f_n(z)=\sum_{j=0}^na_jz^j$ converges almost surely to the Lebesgue measure $\frac{1}{2\pi}d\theta$  if and only if the distribution law of $a_j$ satisfies
   \begin{equation}\label{ic}
   \int_{\Bbb{C}}\log(1+|a|)d{\bf{P}}(a)<\infty.
   \end{equation}
In our setting, an easy computation shows that (\ref{ic}) holds for $\rho>1$ in (\ref{h2}) 



\subsection{Examples}
In this section we provide ensembles of random polynomials for which Theorem \ref{poly} applies. We let $\mathcal{P}_n$ denote the space of polynomials of degree at most $n.$
\begin{example}[Kac Ensemble] Let $K:=\{(z_1,\dots,z_m): \max|z_j|\leq 1\}$ be the unit polydisc in $\Bbb{C}^m$. Then the unweighted global extremal function of $K$ is $V_K:=\max\log^+|z_j|$ and the equilibrium measure $\mu_K=\frac{1}{2\pi}d\theta_1\dots\frac{1}{2\pi}d\theta_m$ where $d\theta$ is the angular measure on the unit circle $S^1.$ Note that equilibrium measure is a BM measure and supported on the torus $(S^1)^m.$ In this case, the monomials $z^J:=z_1^{j_1}\dots z_m^{j_m}$ with $|J|\leq n$ form an orthonormal basis for $\mathcal{P}_n$ with respect to $L^2(\mu_K)$ and a random polynomial is of the form
$$f_n(z)=\sum_{|J|\leq n}a_Jz^J.$$
\end{example}
\begin{example} Let $\Omega$ be a bounded open set in $\Bbb{C}^m$ with $\mathcal{C}^1$ boundary. Then the set $K:=\overline{\Omega}$ is regular \cite[5.3.13]{Klimek}. It follows from \cite{NgZ} and \cite[5.6.7]{Klimek} that the equilibrium measure $\mu_K$ is a BM measure. For instance, if $K=\{\|z\|\leq 1\}$ is the unit ball in $\Bbb{C}^m$ then the (unweighted) global extremal function of $K$ is $V_K(z)=\log^+\|z\|$ and the equilibrium measure $d\sigma=(\frac{i}{\pi}\partial\overline{\partial}\log^+\|z\|)^m$ is the surface area measure on the unit sphere $S^{2m-1}.$ Moreover, the scaled monomials 
 $$c_Jz^J:=(\frac{(j_1+\dots+j_m+m-1)!}{(m-1)!j_1!\dots j_m!})^{\frac12}z_1^{j_1}\dots z_m^{j_m}$$
form an orthonormal basis for $\mathcal{P}_n$ (see \cite[\S4]{BloomS}) with respect to $L^2(\mu_K)$ where we use the multidimensional notation $J=(j_1,\dots, j_m).$ Thus, a random polynomial in this setting is of the form
$$f_n(z)=\sum_{|J|\leq n}a_Jc_Jz^J$$
 Then, applying Theorem \ref{poly} we obtain scaling limits of zeros of random polynomials orthonormalized on $S^{2m-1}.$ \\ \indent
In particular, if $\Omega\subset\Bbb{C}$ is a simply connected domain with real analytic boundary then $\tau=|dz|$ satisfies the BM inequality. Corresponding asymptotic distribution of zeros of Gaussian univariate polynomials are studied in \cite{SZ3}.\\
\end{example}
\begin{example}[Elliptic Polynomials] Let $X=K=\Bbb{P}^m$  and $h:=h_{FS}$ Fubini-Study metric on the hyperplane bundle $L:=\mathcal{O}(1).$ As explained above $H^0(X,\mathcal{O}(n))$ can be identified the set of homogenous polynomials of degree $n$ in $m+1$ variables. Then $S_J^n:=z_0^{j_0}\dots z_m^{j_m}$ form an orthogonal basis for $H^0(X,\mathcal{O}(n))$ where $[z_0:\dots:z_m]$ denotes the homogeneous coordinates on $\Bbb{P}^m.$ Moreover, an easy computation shows that 
$$\|S^n_J\|=(\frac{m!(n-|J|)!j_0!\dots j_m!}{(n+m)!})^{\frac12}$$
(see \cite[\S4]{SZ} for details). Applying Theorem \ref{main} we obtain that almost surely 
$$\Z_{S^k_n}\to\omega_{FS}^k$$ where $\omega_{FS}$ denotes Fubini-Study form on $\p^m.$\\ \indent
In particular, if $m=1$ then the restriction of $S_n$'s to $\Bbb{C}$ gives 
$$f_n(z)=\sum_{j=1}^na_j\sqrt{(n+1){n\choose j}}  z^j$$
which are called elliptic polynomials in the literature. In this setting, almost surely
$$\frac1n\sum_{\{z:f_n(z)=0\}}\delta_z\to \frac{1}{\pi}\frac{dz}{(1+|z|^2)^2}$$ 
weak* as $n\to\infty.$
\end{example}
The next example, motivated from theory of toric varieties, allows us to describe asymptotic distribution of systems of random sparse polynomials:
\begin{example}[Sparse Polynomials] 
By an integral polytope we mean convex hull of a non-empty finite set in $\Bbb{Z}^m.$ Let $P\subset \Bbb{R}^m$ be a Delzant integral polytope (see \cite{Delzant} for precise description). It is well know that such a $P$ induces a triple $(X_P,L_P,h_P)$ where $X_P$ is an $m$ complex dimensional projective toric manifold which contains the complex torus $(\Bbb{C}^*)^m$ as a Zariski dense open set such that the action of $(\Bbb{C}^*)^m$ on it self extends to a $(\Bbb{C}^*)^m$-action on $X_P.$ Moreover, $L_P\to X_P$ is a positive holomorphic line bundle endowed with an invariant (under the action of real torus) smooth positive hermitian metric $h_L$ (see \cite{Delzant, Abreu} for details). Furthermore, we can identify $H^0(X_P,L_P^{\otimes n})$ with the space $Poly(nP)$ spanned by multi-monomials $z^J$ with multi-index $J\in nP\cap \Bbb{Z}^m$ where $nP$ denotes the scaled polytope. Now, letting $K\subset(\Bbb{C}^*)^m$ be a compact set as in Theorem \ref{main} and taking $q\equiv0,$ we obtain asymptotic distribution of i.i.d. systems of random sparse polynomials $(f_n^1,\dots,f_n^m)$ such that $f_n^i\in Poly(nP)$ for $i=1,\dots,m$.   
\end{example}
\section*{Acknowledgement}
I am grateful to Norm Levenberg for many stimulating conversations on the content of this work. I also want to thank Thomas Bloom and Norm Levenberg for their suggestions on an earlier draft. Finally, I would like to thank the anonymous referee for his comments which improve the presentation of this article.

\section{preliminaries}

\subsection{Positive closed currents and super-potentials}
Let $X$ be a connected compact complex manifold and $Aut(X)$ denote the group of holomorphic automorphisms of $X.$ Following Bochner and Montgomery \cite{BM}, $Aut(X)$ is a complex Lie group. We say that $X$ is \textit{homogeneous} if $Aut(X)$ acts transitively on $X.$ In the sequel, we let $(X,\omega)$ be a compact K\"ahler homogeneous manifold of dimension $m$ and $\omega$ is a fixed K\"ahler form. It follows from \cite{BR} that $X$ is a direct product of a complex torus and a projective rational manifold. In particular, complex projective space $\Bbb{P}^m$ and $(\Bbb{P}^1)^m$ are among the examples of such manifolds.  

For $1\leq k\leq m,$ we let $\Ck$ denote the set of all positive closed bidegree $(k,k)$ currents on $X$ which are cohomologous to $\omega^k$. This is a compact convex set. For a current $T\in \Ck,$ we denote its action on a test form $\Phi$ by $\la T,\Phi\ra.$ For a smooth $(p,q)$ form $\Phi$ denote by $\|\Phi\|_{\mathscr{C}^{\alpha}}$ the sum of $\mathscr{C}^{\alpha}$-norms of the coefficients in a fixed atlas. Following \cite{DS11}, for $\alpha>0$ we define a distance function on $\Ck$ by
$$dist_{\alpha}(R,R'):=\sup_{\|\Phi\|_{\mathscr{C}^{\alpha}}\leq 1}|\la R-R',\Phi\ra|$$
where $\Phi$ is a smooth bidegree $(m-k,m-k)$ form on $X.$ It follows from interpolation theory between Banach spaces \cite{Triebel} that
$$dist_{\beta}\leq dist_{\alpha}\leq C_{\alpha\beta}[dist_{\beta}]^{\frac{\alpha}{\beta}}$$
for $0<\alpha\leq \beta<\infty$ (see \cite[Lem. 2.1.2]{DS11} for the proof).  Moreover, for $\alpha\geq1$ 
$$dist_{\alpha}(\delta_a,\delta_b)\simeq\|a-b\|$$
where $\delta_a$ denotes the Dirac mass at $a$ and $\|a-b\|$ denotes the distance on $X$ induced by the K\"ahler form $\omega.$ We also remark that for $\alpha>0$ topology induced by $dist_{\alpha}$ coincides with the weak topology on $\Ck$ (cf. \cite[Prop. 2.1.4]{DS11}). In particular, $\Ck$ is a compact separable metric space. 
\\ \indent
 Let $T\in \Ck$ with $1\leq k\leq m$ then  by $dd^c$-Lemma \cite{GH} there exists a real $(k-1,k-1)$ current $U,$ called a \textit{quasi-potential} of $T$ which satisfies the equation
 \begin{equation}\label{qpotential}
 T=\omega^k+dd^cU
 \end{equation}
 where $d=\partial+\overline{\partial}$ and $d^c:=\frac{i}{2\pi}(\overline{\partial}-\partial)$ so that $dd^c=\frac{i}{\pi}\partial\overline{\partial}$.
  In particular, if $k=1$ a quasi-potential is nothing but a qpsh function. Note that two qpsh functions satisfying (\ref{qpotential}) differ by a constant. When $k>1$ the quasi-potentials differ by $dd^c$-closed currents. 
  For a real current $U$ and $0<r\leq \infty,$ we denote the sum of $\mathscr{L}^r$ norms of its coefficients for a fixed atlas by $\|U\|_{\mathscr{L}^r}.$ The quantity $\la U,\omega^{m-k+1}\ra$ is called the \textit{mean} of $U.$ The following result provides solutions to (\ref{qpotential}) with quantitative estimates.
  \begin{thm}\cite{DS11}\label{DS}
  Let $X$ be a compact K\"ahler homogenous manifold and $T\in \Ck$ then there exists a negative quasi-potential $U$ of $T$ which depends linearly on $T$ such that 
 for every $1\leq r\leq\frac{m}{m-1}$ and $1\leq s<\frac{2m}{2m-1}$ we have
  $$\|U\|_{\mathscr{L}^r}\leq c_r\ \ \text{and}\ \ \|dU\|_{\mathscr{L}^s}\leq c_s$$ 
  where $c_r,c_s$ are positive constants independent of $T.$ Moreover, $U$ depends continuously on $T$ with respect to the $\mathscr{L}^r$ topology on $U$ and weak topology on $T.$ 
  \end{thm}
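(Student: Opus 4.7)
The plan is to produce a universal linear solution operator $T \mapsto U_T$ for the equation $dd^c U = T - \omega^k$, realized as convolution against a Green-type kernel on $X \times X$, and to read off the claimed Lebesgue estimates from the pointwise singularity of the kernel. The homogeneity of $X$ will enter essentially in killing the cohomological obstructions that otherwise prevent such a kernel from existing on a general compact K\"ahler manifold, and in guaranteeing that the singular behavior of the kernel is uniform across the manifold.

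The first step is to construct a form $K$ on $X \times X$, of bidegree $(k-1, k-1)$ in the first factor and $(m-k, m-k)$ in the second, satisfying the current equation
$$dd^c_z K(z,w) = [\Delta] - \omega(z)^k \wedge \Omega(w),$$
where $\Omega$ is a smooth $(m-k, m-k)$ form normalized by $\int_X \omega^k \wedge \Omega = 1$. A local parametrix on a small chart is supplied by the fundamental solution of the Laplacian, with pointwise singularity of order $\|z-w\|^{-(2m-2)}$ along the diagonal and $\|z-w\|^{-(2m-1)}$ for its exterior derivative. To globalize this to a kernel defined on all of $X \times X$, I would use the transitive action of $Aut(X)$, together with the structure theorem of \cite{BR} decomposing $X$ into a product of a complex torus and a projective rational manifold, to transport and average the local model against a Haar measure on a compact subgroup. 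Homogeneity ensures that the resulting global kernel inherits the sharp singularity of the local model uniformly in the base point.

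With $K$ in hand, define
$$U_T(z) := \int_{w \in X} K(z,w) \wedge T(w).$$
Linearity and continuous dependence of $U_T$ on $T$ (weak topology in $T$, $\mathscr{L}^r$ topology in $U_T$) are immediate from the definition. The identity $T = \omega^k + dd^c U_T$ follows from the $dd^c$-equation for $K$ together with $\int_X T \wedge \Omega = \int_X \omega^k \wedge \Omega = 1$, the latter because $T$ and $\omega^k$ are cohomologous. For the Lebesgue bounds, the total mass $\int_X T \wedge \omega^{m-k}$ is a cohomological invariant, so $T$ yields a positive measure on $X$ of fixed bounded mass, against which the kernel $\|z-w\|^{-(2m-2)}$ is convolved. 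Young's inequality together with the fact that this kernel lies in weak-$\mathscr{L}^{m/(m-1)}$ gives $\|U_T\|_{\mathscr{L}^r} \leq c_r$ for $1 \leq r \leq \frac{m}{m-1}$, and the same argument applied with the gradient kernel $\|z-w\|^{-(2m-1)}$ gives $\|dU_T\|_{\mathscr{L}^s} \leq c_s$ for $1 \leq s < \frac{2m}{2m-1}$.

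Finally, since $\mathscr{C}_k$ is compact, the mean $\la U_T, \omega^{m-k+1}\ra$ is uniformly bounded; subtracting a large $dd^c$-closed multiple $C\omega^{k-1}$ renders $U_T$ negative while preserving the equation, the linearity, and the continuity. The main obstacle is the construction in the second paragraph: producing one kernel that simultaneously solves the correct global $dd^c$-equation and achieves the optimal $\|z-w\|^{-(2m-2)}$ singularity requires a nontrivial cohomological and group-theoretic averaging argument, and this is the precise point where the homogeneity of $X$ becomes indispensable; one also needs some care at the endpoint $r = \frac{m}{m-1}$ of the Lebesgue estimate, which is handled by combining weak-type bounds with the uniform mass control on $\mathscr{C}_k$.
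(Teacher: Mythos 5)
The statement you are proving is, in the paper, a quotation from \cite{DS11}; the paper itself only sketches the construction right after the theorem, and your outline (a kernel on $X\times X$ solving a $dd^c$-equation for the diagonal, $U_T$ obtained by fiber integration, $\mathscr{L}^r$ bounds read off the kernel singularity) follows that same route. However, your version of the kernel equation is cohomologically impossible: you ask for $dd^c_z K(z,w)=[\Delta]-\omega^k(z)\wedge\Omega(w)$, but by the K\"unneth formula the class of $[\Delta]$ in $H^{2m}(X\times X)$ has nonzero components in many summands $H^{2j}(X)\otimes H^{2m-2j}(X)$ (all $j=0,\dots,m$ when $X=\p^m$), while $\omega^k(z)\wedge\Omega(w)$ lies in a single summand, so the right-hand side is not exact and no such $K$ exists. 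The correct formulation, which the paper records, replaces $\omega^k(z)\wedge\Omega(w)$ by a smooth closed $(m,m)$ representative $\Omega$ of the full diagonal class, a finite sum of products $\beta(z)\wedge\beta'(\zeta)$ of closed forms; then $dd^cU_T=T-\omega^k$ follows because $(\pi_1)_*\big(\pi_2^*(T-\omega^k)\wedge\Omega\big)=0$, which uses that the class of $T-\omega^k$ vanishes, not any normalization such as $\int_X T\wedge\Omega=1$.

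Two further steps have genuine gaps. First, negativity: subtracting a fixed multiple $C\omega^{k-1}$ makes $U_T$ negative only if the positive part of $U_T$ is uniformly bounded as a form; with a kernel obtained by patching and averaging a local parametrix, $K$ has no sign, the term $\int \pi_2^*T\wedge K$ is unbounded of both signs, and no finite $C$ works. In \cite{DS11} the kernel is constructed to be a \emph{negative} form with $\|K\|_\infty\lesssim -\mathrm{dist}(\cdot,\Delta)^{2(1-k)}\log\mathrm{dist}(\cdot,\Delta)$ and $\|\nabla K\|_\infty\lesssim \mathrm{dist}(\cdot,\Delta)^{1-2k}$; then the singular term $\int\pi_2^*T\wedge K$ is automatically $\leq 0$ and the only positive contribution, $-\int\omega^k(\zeta)\wedge K(\cdot,\zeta)$, is smooth and uniformly bounded, which is what permits a uniform shift while keeping linearity up to a fixed smooth form. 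Producing this negative kernel with the sharp, uniform singularity is precisely the content of \cite[Prop.~2.3.2]{DS11}, and it is where homogeneity enters (through regularization by the automorphism group); your ``average a local parametrix over a compact subgroup'' names the difficulty rather than resolving it. Second, the endpoint: a current of bounded mass convolved against a kernel in weak-$\mathscr{L}^{m/(m-1)}$ lands only in weak-$\mathscr{L}^{m/(m-1)}$, so ``weak-type bound plus mass control'' does not give the strong bound at $r=m/(m-1)$; recovering the stated range requires the finer, bidegree-dependent estimates above rather than the blanket $\|z-w\|^{-(2m-2)}$ model. Since these are exactly the points established in \cite{DS11}, the honest conclusion is that the theorem should be cited, as the paper does, unless you carry out the kernel construction in full.
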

The quasi-potential $U$ is obtained in \cite{DS11} by using a kernel which solves $dd^c$-equation for the diagonal $\Delta$ of $X\times X$ (see also \cite{GiS, BGS}). More precisely, the current of integration $[\Delta]$ along the diagonal $\Delta\subset X\times X$ defines a positive closed $(m,m)$ current. It follows from K\"{u}nneth formula that $[\Delta]$ is cohomologous to a smooth real closed $(m,m)$ form $\Omega$ which is a linear combination of smooth forms of type $\beta(z)\wedge \beta'(\zeta)$ where $\beta$ and $\beta'$ are closed real forms on $X$ of bidegree $(r,m-r)$ and $(m-r,r)$ respectively (cf. \cite[\S 2.1]{DS10}). Then by \cite[Proposition 2.3.2]{DS11} there exists a negative $(m-1,m-1)$ form $K$ on $X\times X,$ smooth away from $\Delta$ such that $$dd^cK=[\Delta]-\Omega$$ satisfying    
\begin{equation}\label{DSes}
\|K(\cdot)\|_{\infty} \lesssim -\text{dist}(\cdot,\Delta)^{2(1-k)}\log \text{dist}(\cdot,\Delta)\ \ \text{and}\ \ \|\nabla K(\cdot)\|_{\infty} \lesssim \text{dist}(\cdot,\Delta)^{1-2k}
\end{equation}
where $\|\nabla K\|_{\infty}$ denotes the sum $\sum_j|\nabla K_j|$ and $K_j$'s are the coefficients of $K$ for a fixed atlas of $X\times X.$ 
 This implies that for $T\in\Ck,$ the $(k-1,k-1)$ current 
$$U(z):=\int_{z\not=\zeta}(T(\zeta)-\omega^k(\zeta))\wedge K(z,\zeta)$$
is well defined (cf. \cite[Theorem 2.3.1]{DS11}). Moreover, $$dd^cU=T-\omega^k.$$ Indeed, let $\pi_i:X\times X\to X$ denote the projection on the $i$th coordinate with $i=1,2.$ Note that $$U=(\pi_1)_*(\pi_2^*(T-\omega^k)\wedge K)$$ and since $T-\omega^k$ is closed 
\begin{eqnarray*}
dd^cU &=& (\pi_1)_*(\pi_2^*(T-\omega^k)\wedge dd^c K)\\&=& (\pi_1)_*(\pi_2^*(T-\omega^k)\wedge [\Delta])-(\pi_1)_*(\pi_2^*(T-\omega^k)\wedge \Omega)\\
&=& T-\omega^k
\end{eqnarray*}
where the last equality follows from observing that the cohomology class $\{T-\omega^k\}=0$ in $H^{k,k}(X,\Bbb{R})$ and $\Omega$ is a linear combination of smooth forms of type $\beta\wedge \beta'$ with $\beta$ and $\beta'$ are closed.
\\ \indent
Super-potentials of positive closed currents were introduced by Dinh and Sibony \cite{DS11} in the setting of complex projective space $\Bbb{P}^m$ (see also \cite{DS10}). The approach of \cite{DS11} can be easily extended to compact K\"ahler homogeneous manifolds. If $T$ is a smooth form in $\Ck,$ \textit{super-potential} of $T$ of mean $c$ is  defined by
$$\U_T:\Cmk \to \Bbb{R}\cup\{-\infty\}$$  
\begin{equation}\label{superr}
\U_T(R)=\la T,U_R\ra
\end{equation}
where $U_R$ is a quasi-potential of $R$ of mean $c.$ Then it follows that (see \cite[Lemma 3.1.1]{DS11})
\begin{equation}\label{sym}
\U_T(R)=\la U_T,R\ra
\end{equation}
where $U_T$ is a quasi-potential of $T$ of mean $c.$ In particular, the definition of $\U_T$ in (\ref{superr}) is independent of the choice of $U_R$ of mean $c$. Note that super-potential of $T$ of mean $c'$ is given by $\U_T+c'-c.$ More generally, for an arbitrary current $T\in\Ck$ super-potential of $T$ is defined by $\U_T(R)$ on smooth forms $R\in \Cmk$ as in (\ref{superr}) where $U_R$ is smooth. Then the definition of super-potential can be extended in a unique way to an affine usc function on $\Cmk$ with values in $\Bbb{R}\cup \{-\infty\}$ by approximation (see \cite[Proposition 3.1.6]{DS11} and \cite[Corollary 3.1.7]{DS11}). Namely, $$\U_T:\Cmk \to \Bbb{R}\cup \{-\infty\}$$
$$\U_T(R)=\limsup_{R'\to R}\U_T(R')$$ where $R'\in \Cmk$ is smooth.
\begin{rem}\label{L1} 
It follows from Theorem \ref{DS} that for each $T\in \Ck$ there exists a negative super potential $\U_T$ of $T$ such that its mean satisfies $$|\U_T(\ommk)|\leq C$$ where $C>0$ is independent of $T\in \Ck.$
\end{rem}

Another feature of super-potentials is that for each $1\leq k\leq m,$ one can define a function $$\U_k:\Ck\times \Cmk \to \Bbb{R}$$
$$\U_k(T,R):=\U_T(R)=\U_R(T)$$ where $\U_T$ and $\U_R$ are super-potentials of $T$ and $R$ of the same mean. Moreover, $\U_k$ is u.s.c. (cf.  \cite[Lemma 4.1.1]{DS11}):
\begin{lem}\label{Climsup}
Let $T_n\in \Ck, R_n\in\Cmk$ be sequences of positive closed currents such that $T_n\to T$ and $R_n\to R$ in the sense of currents as $n\to \infty.$ Then $$\limsup_{n\to \infty}\U_{T_n}(R_n)\leq \U_T(R).$$ 
\end{lem}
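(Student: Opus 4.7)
The plan is to exploit the kernel representation of the super-potential derived from the Dinh--Sibony kernel $K$ of (\ref{DSes}). When $R\in \Cmk$ is smooth, the canonical quasi-potential from Theorem~\ref{DS} is
$$U_R(z)=\int_{\zeta}K(z,\zeta)\wedge\bigl(R(\zeta)-\ommk(\zeta)\bigr),$$
and substitution into (\ref{superr}) yields
$$\U_T(R)=\int_{(X\times X)\setminus\Delta}K(z,\zeta)\wedge\pi_1^*T(z)\wedge\pi_2^*R(\zeta)-\la T,W\ra,$$
where $W(z):=\int_{\zeta}K(z,\zeta)\wedge\ommk(\zeta)$ is a \emph{smooth} form on $X$. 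By the standard usc-envelope extension of the super-potential to $\Ck\times\Cmk$ (following \cite[Prop.~3.1.6]{DS11}), this identity remains valid for arbitrary $T\in\Ck$, $R\in\Cmk$ once the first integral is interpreted as an element of $\Bbb{R}\cup\{-\infty\}$. Since $W$ is smooth, $\la T_n,W\ra\to \la T,W\ra$ as $T_n\to T$ weakly, so the entire burden reduces to proving
$$\limsup_{n\to\infty}I(T_n,R_n)\leq I(T,R),\qquad I(S,S'):=\int K\wedge\pi_1^*S\wedge\pi_2^*S'.$$

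For this I would split $K=K^{out}_{\delta}+K^{in}_{\delta}$ using a smooth cutoff that is $1$ off a $\delta$-neighborhood of the diagonal $\Delta\subset X\times X$. The part $K^{out}_{\delta}$ is then a smooth form on $X\times X$, and since $T_n\to T$, $R_n\to R$ weakly on the respective factors, the product currents $\pi_1^*T_n\wedge\pi_2^*R_n$ converge weakly to $\pi_1^*T\wedge\pi_2^*R$, giving
$$\int K^{out}_{\delta}\wedge\pi_1^*T_n\wedge\pi_2^*R_n\ \longrightarrow\ \int K^{out}_{\delta}\wedge\pi_1^*T\wedge\pi_2^*R.$$
For the singular piece $K^{in}_{\delta}$, the estimate (\ref{DSes}) shows that $-K^{in}_{\delta}$ is nonnegative and lower semi-continuous on $X\times X$, tending to $+\infty$ on $\Delta$. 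Approximating $-K^{in}_{\delta}$ from below by an increasing sequence of smooth nonnegative forms and pairing each with the weakly convergent positive currents $\pi_1^*T_n\wedge\pi_2^*R_n$ yields the Portmanteau-type inequality
$$\liminf_{n\to\infty}\int(-K^{in}_{\delta})\wedge\pi_1^*T_n\wedge\pi_2^*R_n\ \geq\ \int(-K^{in}_{\delta})\wedge\pi_1^*T\wedge\pi_2^*R,$$
equivalently $\limsup_n\int K^{in}_{\delta}\wedge\pi_1^*T_n\wedge\pi_2^*R_n\leq \int K^{in}_{\delta}\wedge\pi_1^*T\wedge\pi_2^*R$. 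Combining the two pieces and then letting $\delta\to 0$ (using that both sides of the inequality are continuous in $\delta$ for fixed $T,R$ by the integrability estimates (\ref{DSes}) coupled with bounded mass of elements of $\Ck$ and $\Cmk$) produces $\limsup I(T_n,R_n)\leq I(T,R)$, completing the argument.

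The main obstacle I anticipate is the Portmanteau step: one must ensure that mass of $\pi_1^*T_n\wedge\pi_2^*R_n$ does not concentrate on $\Delta$ in a way that defeats the lsc bound. This is controlled by the quantitative singularity bound (\ref{DSes}) for $K$, the uniformly bounded cohomological mass of sequences in $\Ck$ and $\Cmk$ (both are compact), and the fact that the product currents on $X\times X$ are positive and closed, so their mass on a $\delta$-tube around $\Delta$ is uniformly $O(\delta^{2})$ up to logarithmic corrections. A secondary technical point is verifying that the kernel representation extends to nonsmooth $T,R$, which I would justify by choosing smooth approximations of $R$ supplied by the group convolution on $Aut(X)$ (available because $X$ is homogeneous) and invoking the definition of $\U_T(R)$ as an usc limit.
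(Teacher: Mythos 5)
The paper itself gives no proof of this lemma; it is quoted directly from \cite[Lemma 4.1.1]{DS11}, so your kernel-based argument should be measured against the Dinh--Sibony machinery rather than anything in the text. The semicontinuity half of your plan is essentially sound: the tensor-product currents $\pi_1^*T_n\wedge\pi_2^*R_n$ do converge weakly to $\pi_1^*T\wedge\pi_2^*R$ on $X\times X$, and since $-K\geq 0$, pairing with cutoffs $\chi_j(-K)$ that vanish near $\Delta$ and invoking monotone convergence gives $\limsup_n I(T_n,R_n)\leq I(T,R)$ directly. Note that you do not need the $O(\delta^2)$ mass bound near the diagonal that you flag as the main obstacle (mass concentration on $\Delta$ only strengthens the inequality in the direction you need), you do not need the final limit $\delta\to0$ (the two pieces already sum to $I(T,R)$ for each fixed $\delta$), and $W$ need only be continuous --- which is all the estimates (\ref{DSes}) readily give --- since the uniform mass of currents in $\Ck$ then yields $\la T_n,W\ra\to\la T,W\ra$.

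The genuine gap is the identity $\U_T(R)=I(T,R)-\la T,W\ra$ (with the mean normalizations, which you do not track) for \emph{arbitrary} non-smooth $T\in\Ck$ and $R\in\Cmk$. This is the heart of the lemma, and it does not follow ``by the standard usc-envelope extension'': the super-potential is defined as $\limsup_{R'\to R}\la T,U_{R'}\ra$ over all smooth $R'\in\Cmk$, and nothing formal identifies this limsup with the value computed by the Green quasi-potential $U_R$. Your scheme in fact needs both inequalities separately: $\U_{T_n}(R_n)\leq I(T_n,R_n)-\la T_n,W\ra$ to start the estimate, and $I(T,R)-\la T,W\ra\leq \U_T(R)$ to finish it. Proving these requires the structural regularization of \cite{DS11}: the $Aut(X)$-convolution $R_\theta$ of $R$, the convergence of the associated Green quasi-potentials $U_{R_\theta}$ to $U_R$ in a form that can be tested against an arbitrary positive closed current, and a Hartogs-type lemma showing the usc envelope is attained along this special family. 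That is precisely the content of the relevant sections of \cite{DS11} and is no easier than Lemma \ref{Climsup} itself, so as written your argument either silently assumes the hard step or must quote from the same source a statement of essentially the same depth as the one being proved --- which is what the paper does by citing \cite[Lemma 4.1.1]{DS11} outright.
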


Next result indicates that super-potentials determine the currents:
\begin{prop}\label{determine}
Let $T,T'$ be currents in $\Ck$ with super-potentials $\U_T,\U_{T'}$ of mean $c$. If $\U_T=\U_{T'}$ on smooth forms in $\Cmk$ then $T=T'.$
\end{prop}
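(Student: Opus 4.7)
The plan is to convert the hypothesis into a vanishing statement about a quasi-potential of $T-T'$ and then test against a one-parameter family of smooth elements of $\Cmk$ obtained by perturbing $\ommk$. Pick quasi-potentials $U_T$ and $U_{T'}$ of $T$ and $T'$, both of mean $c$, and set $V:=U_T-U_{T'}$. Then $V$ is a $(k-1,k-1)$ current with $dd^cV=T-T'$ and mean $\la V,\ommk\ra=0$. Using the symmetry identity (\ref{sym}), the assumption $\U_T(R)=\U_{T'}(R)$ for every smooth $R\in\Cmk$ becomes
$$\la V,R\ra=\la U_T,R\ra-\la U_{T'},R\ra=\U_T(R)-\U_{T'}(R)=0$$
for every smooth $R\in\Cmk$.

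Next, for an arbitrary smooth real $(m-k,m-k)$ test form $\Phi$ on $X$, consider the one-parameter family
$$R_t:=\ommk+t\,dd^c\Phi,\qquad t\in\Bbb{R}.$$
Each $R_t$ is a smooth closed $(m-k+1,m-k+1)$ form cohomologous to $\ommk$. Since $\ommk$ is strongly positive and lies in the interior of the cone of strongly positive $(m-k+1,m-k+1)$ forms at every point of the compact manifold $X$, a uniform compactness argument shows that $R_t$ remains strongly positive (hence in $\Cmk$) whenever $|t|\leq t_0$ for some $t_0>0$ depending on $\Phi$. Applying the vanishing above to $R_t$ and using that $\la V,\ommk\ra=0$ gives
$$0=\la V,R_t\ra=t\,\la V,dd^c\Phi\ra\qquad\text{for all }|t|\leq t_0,$$
which forces $\la V,dd^c\Phi\ra=0$, i.e.\ $\la dd^cV,\Phi\ra=0$. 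Since $\Phi$ was an arbitrary smooth real $(m-k,m-k)$ form, we conclude $dd^cV=0$, i.e.\ $T=T'$.

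The main technical point I would expect to verify carefully is the positivity claim for $R_t$, namely that a sufficiently small smooth perturbation of $\ommk$ remains a positive $(m-k+1,m-k+1)$ current. This uses that $\omega$ is K\"ahler, so $\ommk$ is strongly positive pointwise and its value at each $x\in X$ lies in the open cone of strongly positive $(m-k+1,m-k+1)$ forms on $T_xX$; compactness of $X$ then promotes this to a uniform smallness bound $t_0=t_0(\Phi)>0$. Everything else is a bookkeeping calculation with the symmetry of super-potentials and the fact that a current vanishes iff it pairs to zero against all smooth test forms of complementary bidegree.
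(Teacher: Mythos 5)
Your proof is correct and follows essentially the same route as the paper: the paper likewise tests the equality of super-potentials against a positive form $C\ommk+dd^c\Phi$ (i.e.\ a small positive perturbation of $\ommk$ in the class of $\ommk$ after rescaling) to conclude $\la T,\Phi\ra=\la T',\Phi\ra$ for every smooth $\Phi$. Your reformulation via $V=U_T-U_{T'}$, the symmetry (\ref{sym}), and the mean-zero normalization is just a slightly more explicit bookkeeping of the same computation, and the positivity of $R_t$ for small $t$ is exactly the paper's choice of the constant $C$.
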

\begin{proof}
Let $\Phi$ be a smooth $(m-k,m-k)$ form. Then there exists $C>0$ such that $C\omega^{m-k+1}+dd^c\Phi\geq0.$ Thus, $$\U_T(C\omega^{m-k+1}+dd^c\Phi)=\U_{T'}(C\omega^{m-k+1}+dd^c\Phi)$$ which implies that 
$$\la T,\Phi\ra=\la T',\Phi\ra.$$ 
\end{proof}
\subsection{Currents with continuous super-potentials}
 In this section we consider currents $T\in\Ck$ with continuous super-potentials.\\
 {\bf{The space $DSH^{m-k}(X)$:}} Following \cite{DS3,DS11}, a real $(m-k,m-k)$ current $\Phi$ of finite mass is called dsh if there exists positive closed currents $R^{\pm}$ of bidegree $(m-k+1,m-k+1)$ such that $dd^c\Phi=R^+-R^-.$ Then one can define $$\|\Phi\|_{DSH}:=\|\Phi\|+\min\|R^{\pm}\|$$ where $\|R^{\pm}\|:=|\int_X R^{\pm}\wedge \omega^{k-1}|.$ Note that since $R^+$ and $R^-$ are cohomologous we have $\|R^+\|=\|R^-\|.$ We consider the space $DSH^{k-p}(X)$ with the weak topology: we say that $\Phi_n$ converges to $\Phi$ if 
 \begin{itemize}
 \item $\Phi_n\to \Phi$ in the sense of currents
 \item $\|\Phi_n\|_{DSH}$ is bounded
 \end{itemize}
  It follows from Theorem \ref{DS} that if $R_n \to R$ weakly in $\Ck$ then there exists negative quasi-potentials $U_n,U$ of $R_n,R$ such that $U_n$ converges to $U$ in $DSH^{k-1}(X).$ A positive closed current $T\in \Ck$ is called \textit{PC} if $T$ can be extended to a linear continuous form on $DSH^{m-k}(X).$ We denote the value of the extension by $\la T,\Phi\ra.$ Since smooth forms are dense in $DSH^{m-k}(X)$ the extension is unique. The following result is a consequence of Theorem \ref{DS} (see \cite[Proposition 3.3.1]{DS11}).
  
  \begin{prop}
  A positive closed current is PC if and only if $T$ has continuous super-potentials.
  \end{prop}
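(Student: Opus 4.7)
Both implications rest on identifying quasi-potentials of currents in $\Cmk$ with elements of $DSH^{m-k}(X)$, together with the quantitative Theorem \ref{DS}.

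\emph{From PC to continuous super-potentials.} Suppose $T$ extends to a continuous linear form on $DSH^{m-k}(X)$. For any $R\in\Cmk$, Theorem \ref{DS} furnishes a negative quasi-potential $U_R$ with $dd^c U_R=R-\omega^{m-k+1}$, which is a difference of two positive closed $(m-k+1,m-k+1)$ currents, so $U_R\in DSH^{m-k}(X)$. Hence $\la T,U_R\ra$ is a finite number, which by definition coincides with $\U_T(R)$ up to the fixed mean-dependent constant. For $R_n\to R$ in $\Cmk$, the observation recalled just before the proposition gives negative quasi-potentials $U_{R_n}\to U_R$ in $DSH^{m-k}(X)$; continuity of the PC extension of $T$ then yields $\U_T(R_n)\to \U_T(R)$, so $\U_T$ is continuous (and finite-valued).

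\emph{From continuous super-potentials to PC.} Assume $\U_T$ is finite and continuous on $\Cmk$. For a smooth bidegree $(m-k,m-k)$ form $\Phi$, choose $c>0$ so that $R^{\pm}:=c\omega^{m-k+1}\pm dd^c\Phi\ge 0$ pointwise; then $R^{\pm}$ are positive closed and cohomologous to $c\omega^{m-k+1}$, so $R^{\pm}/c\in\Cmk$ with $\pm\Phi/c$ as smooth quasi-potentials. Invoking the symmetry relation (\ref{sym}), one derives
\[
\la T,\Phi\ra=\tfrac{c}{2}\bigl(\U_T(R^+/c)-\U_T(R^-/c)\bigr)+\kappa(c),
\]
where $\kappa(c)$ depends only on the mean normalization and on $c$. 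For a general $\Phi\in DSH^{m-k}(X)$ with $\|\Phi\|_{DSH}\le 1$ and $dd^c\Phi=R^+-R^-$, I would approximate by a sequence of smooth forms $\Phi_n\to\Phi$ in $DSH^{m-k}(X)$, chosen so that the accompanying rescaled currents converge in $\Cmk$. Continuity of $\U_T$ combined with the explicit formula above then shows $\la T,\Phi_n\ra$ is Cauchy, with limit independent of the chosen approximation; this defines the continuous extension of $T$ to $DSH^{m-k}(X)$.

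\emph{Main obstacle.} The delicate point is the second direction: producing smooth approximations $\Phi_n\to\Phi$ in $DSH^{m-k}(X)$ whose associated positive closed currents $R_n^\pm$ remain in a bounded subset of $\Cmk$ after rescaling and actually converge there (not merely in mass). This uses the bound $\|\Phi\|_{DSH}\le 1$ together with compactness of $\Cmk$ and is exactly where the homogeneity of $X$ enters, via the continuous dependence of quasi-potentials in Theorem \ref{DS}. Once this approximation scheme is in place, continuity of $\U_T$ does the rest through the symmetry identity.
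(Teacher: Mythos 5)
The paper offers no proof of this proposition; it is imported wholesale from \cite[Proposition 3.3.1]{DS11} as a consequence of Theorem \ref{DS}, so your attempt can only be judged against that standard argument. Your first direction (PC $\Rightarrow$ continuous super-potentials) is essentially that argument and is sound modulo bookkeeping: the quasi-potentials $U_R$ of Theorem \ref{DS} are DSH, they converge in $DSH^{m-k}(X)$ when $R_n\to R$ in $\Cmk$, and pairing with the continuous extension of $T$ gives continuity of $\U_T$; you should just add the consistency check that the PC pairing $\la T,U_R\ra$ agrees with the $\limsup$-regularized definition of $\U_T(R)$ for non-smooth $R$ (which follows from the same DSH convergence applied to smooth $R'\to R$), and keep track of the varying means of the $U_R$'s.

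The converse direction has a genuine gap. Your formula ties the constant $c$ to $\|dd^c\Phi\|_{\infty}$, so for a genuinely singular $\Phi\in DSH^{m-k}(X)$ any smooth approximation $\Phi_n$ forces $c_n\to\infty$, while the rescaled currents $R_n^{\pm}/c_n=\ommk\pm dd^c\Phi_n/c_n$ both converge to $\ommk$. Your identity then reads $\la T,\Phi_n\ra=\tfrac{c_n}{2}\bigl(\U_T(R_n^+/c_n)-\U_T(R_n^-/c_n)\bigr)+\kappa_n$, an indeterminate product of a factor tending to $\infty$ with a difference tending to $0$: mere continuity of $\U_T$, with no modulus, cannot show this is Cauchy, and the bound $\|\Phi\|_{DSH}\le 1$ together with compactness of $\Cmk$ does not repair it. Moreover, even if a limit value were produced, PC requires continuity of the extension for the DSH topology (weak convergence with bounded DSH norms, hence arbitrary decompositions $dd^c\Phi_n=R_n^+-R_n^-$), which your scheme never verifies. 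The argument behind \cite[Proposition 3.3.1]{DS11} avoids approximating $\Phi$ altogether: given $dd^c\Phi=R^+-R^-$ with common mass $c$, one defines the extension directly by $\la T,\Phi\ra:=\la \omk,\Phi\ra+c\,\U_T(R^+/c)-c\,\U_T(R^-/c)$ (with suitable mean normalizations), checks via (\ref{sym}) that this agrees with the usual pairing for smooth $\Phi$ and is independent of the decomposition, and then proves DSH-continuity by extracting weak limits of the mass-bounded currents $R_n^{\pm}$ and using continuity of $\U_T$ on $\Cmk$ — the blow-up of constants never occurs because the decomposition is not anchored to $\ommk$ plus a multiple of $dd^c\Phi$. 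A further point your special choice hides: one needs the normalized currents to lie in $\Cmk$, i.e.\ to be cohomologous to $\ommk$; this is automatic for $c\,\ommk\pm dd^c\Phi$ and on $\p^m$, but for general decompositions on a homogeneous $X$ with larger $H^{m-k+1,m-k+1}(X,\Bbb{R})$ it must be addressed (by restricting the admissible decompositions or working with super-potentials relative to other classes).
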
 
The next result is also adapted from \cite{DS11} that relates continuity of quasi-potentials to that of super-potentials of a positive closed current. We provide a proof for convenience of the reader.     
\begin{prop} \label{cont} Let $T\in\mathscr{C}_1$
\begin{itemize}
\item[(1)]  $T$ is PC if and only if $T$ has continuous quasi-potentials.

\item[(2)] Let $R\in \Ck$ be a PC current and $T=\omega+dd^c u$ for some continuous qpsh function $u.$ Then $T\wedge R$ is a PC current. In particular, $$T^k:=T \wedge \dots \wedge T$$ is PC  for $1\leq k\leq m.$ 
\end{itemize}
\end{prop}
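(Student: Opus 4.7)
The plan is to reduce both parts to the preceding proposition (PC $\Leftrightarrow$ continuous super-potential) via a single super-potential calculation in each case.

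For part (1), pick a qpsh quasi-potential $u$ of $T$ of mean $c$. The symmetry identity (\ref{sym}) gives $\mathscr{U}_T(R)=\langle u,R\rangle=\int_X u\,dR$ for smooth $R\in\mathscr{C}_m$. If $u$ is continuous, then $R\mapsto\int u\,dR$ is weakly continuous on the compact space $\mathscr{C}_m$ and its limsup extension inherits this continuity. For the converse, test $\mathscr{U}_T$ along the continuous family $z\mapsto c_0\delta_z\in\mathscr{C}_m$ with $c_0:=\int_X\omega^m$; approximating each Dirac mass by smoothings $R'_n$ concentrating near $z$ and using the upper semicontinuity $u(z)=\limsup_{w\to z}u(w)$ enjoyed by qpsh functions, one obtains $\mathscr{U}_T(c_0\delta_z)=c_0\,u(z)$, so continuity of the super-potential forces continuity of $u$.

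For part (2), by part (1) write $T=\omega+dd^c u$ with $u$ continuous qpsh and set
\begin{equation*}
T\wedge R:=\omega\wedge R + dd^c(uR),
\end{equation*}
which is well-defined because $u$ is continuous and $R$ is of order zero. Approximating $u$ uniformly by smooth qpsh $u_\epsilon$ shows $T\wedge R=\lim_\epsilon T_\epsilon\wedge R$ is positive closed, hence lies in $\mathscr{C}_{k+1}$. By the preceding proposition, it suffices to check that $\mathscr{U}_{T\wedge R}$ is continuous on $\mathscr{C}_{m-k}$. On smooth $S\in\mathscr{C}_{m-k}$ with smooth quasi-potential $U_S$, integration by parts (using $dR=d^cR=0$) yields
\begin{equation*}
\mathscr{U}_{T\wedge R}(S)=\mathscr{U}_R(\omega\wedge S)+\int_X u\,d(R\wedge S)-\int_X u\,d(R\wedge\omega^{m-k}).
\end{equation*}
The first summand is continuous on all of $\mathscr{C}_{m-k}$ because $R$ is PC and $S\mapsto\omega\wedge S$ is continuous into $\mathscr{C}_{m-k+1}$; the last is a constant. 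For the middle summand, write $\int u_\epsilon\,d(R\wedge S)=\langle R,u_\epsilon S\rangle$; since $dd^c(u_\epsilon S)=T_\epsilon\wedge S-\omega\wedge S$ is a difference of positive closed currents of uniformly bounded mass, $u_\epsilon S\in DSH^{m-k}(X)$ with norm bounded independently of $S$, so the PC-extension of $R$ gives continuity of $S\mapsto\langle R,u_\epsilon S\rangle$. Passing $u_\epsilon\to u$ uniformly transfers this continuity to the limit, and $T\wedge R$ is PC. The ``in particular'' claim is a direct induction on $k$: $T$ is PC by part (1), and assuming $T^{k-1}$ is PC, the first assertion of (2) applied with $R=T^{k-1}$ yields $T^k$ PC.

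The main obstacle is exchanging the uniform limit $u_\epsilon\to u$ with the pairing $\langle R,\cdot\rangle$ in the middle summand; this is precisely where the PC-hypothesis on $R$ (continuous extension to $DSH^{m-k}(X)$) together with a uniform $DSH$-norm bound on $u_\epsilon S$ is essential.
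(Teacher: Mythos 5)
Your proof is correct in substance, and despite the super-potential packaging it runs on the same engine as the paper's: in part (1) your evaluation of $\U_T$ at $c_0\delta_z$ is the paper's pairing of $T$ with a current $\Phi_x$ solving $dd^c\Phi_x=\delta_x-\omega^m$, and in part (2) the only real input is that, since $R$ is PC, currents of the form $(\text{function})\cdot S$ with $S$ positive closed lie in $DSH^{m-k}(X)$ and can be paired with $R$ --- exactly how the paper defines $\la T\wedge R,\Phi\ra:=\la R,\omega\wedge \Phi\ra+\la R,u\,dd^c\Phi\ra$ directly on $DSH^{m-k-1}(X)$, whereas you verify continuity of the super-potential through the product formula (\ref{wedge}). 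Two points in your version deserve justification. First, uniform approximation of $u$ by \emph{smooth qpsh} functions is not automatic on an arbitrary compact K\"ahler manifold; here it holds by averaging over automorphisms near the identity (homogeneity), and in fact qpsh-ness of $u_\epsilon$ is only needed for positivity of $T\wedge R$, which Bedford--Taylor gives anyway: for the continuity step any smooth approximants suffice via $dd^c(u_\epsilon S)=(dd^cu_\epsilon+C_\epsilon\omega)\wedge S-C_\epsilon\,\omega\wedge S$. Second, passing $u_\epsilon\to u$ requires $\sup_S|\la R,(u_\epsilon-u)S\ra|\to0$, and the uniform DSH-norm bound on $u_\epsilon S$ alone does not give this, since PC provides continuity for the weak DSH topology rather than a norm estimate; what does the job is that $\varphi\mapsto\la R,\varphi S\ra$ is a positive measure $R\wedge S$ of cohomologically bounded mass $\int_X\omega^m$, so $|\la R,(u_\epsilon-u)S\ra|\leq\|u_\epsilon-u\|_\infty\,\|R\wedge S\|$ uniformly in $S$ --- which is precisely the paper's key observation.
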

\begin{proof}
\begin{itemize}
\item[(1)] Let $T=\omega+dd^c u$ where $u$ is continuous and $\Phi$ be a current in $DSH^{m-1}(X).$ We write $dd^c\Phi=\nu^+-\nu^-$ for some measures $\nu^{\pm}\in \mathscr{C}_m.$ If $\Phi$ and $\nu^{\pm}$ are smooth then 
\begin{eqnarray*}
\la T,\Phi\ra &=& \la \omega,\Phi\ra+\la u,dd^c\Phi\ra\\
&=& \la \omega,\Phi\ra+\la u,\nu^+\ra-\la u,\nu^-\ra
\end{eqnarray*}
since right hand side is well-defined and depends continuously on $\Phi\in DSH^{m-1}(X)$ we conclude that $T$ extends to a continuous linear form on $DSH^{m-1}(X).$\\
Conversely, for $x\in X$ let $\Phi_x$ be a $(m-1,m-1)$ current satisfying $dd^c\Phi_x=\delta_x-\omega^m.$ Then using a regularization of $\Phi_x$ we see that $$\la T,\Phi_x\ra= \la \omega,\Phi_x\ra-\la u,\omega^m\ra+u(x)$$ since $\Phi_x$ and $\la T,\Phi_x \ra$ depend continuously on $x$ we conclude that  $u$ is continuous on $X.$
\item[(2)] Since $T$ has locally bounded potentials the current $T\wedge R$ is well-defined \cite{BT2}. Now, if $\Phi\in DSH^{m-k-1}(X)$ is a smooth form then one can define 
$$\la T\wedge R,\Phi\ra:=\la R,\omega\wedge \Phi\ra +\la R,udd^c\Phi\ra$$
When $\Phi$ is not smooth the right hand side is still well-defined and depends continuously on $\Phi.$ Indeed, since $R$ is PC, for every positive closed $(m-k,m-k)$ current $S$ the measure $R\wedge S$ is well-defined and depends continuously on $S.$ This is because for every smooth function $\varphi$ the current $\varphi S$ is DSH. Thus, we can define $$\la R\wedge S,\varphi\ra:=\la R,\varphi S\ra.$$
Hence, $T\wedge R$ extends to a continuous linear form on $DSH^{m-k-1}(X).$  
\end{itemize}
\end{proof}
\subsubsection{Super-potentials of intersection products}
Let $T_1$ and $T_2$ be two positive closed current of bidegree $(1,1)$ and $(k,k)$ respectively and assume that $1\leq k\leq m-1.$ We also let $T_1=\omega+dd^c\varphi$ where $\varphi$ is a qpsh function and $\U_{T_1}$ denote the super-potential of $T_1$ of mean zero. Recall that the wedge product $T_1\wedge T_2$ is well-defined in the sense of currents if and only if  $\varphi\in L^1(T_2\wedge\omega^{m-k})$ (see \cite[Chapter 1]{DemBook}). In this case, by \cite[\S 4]{DS11} the super-potential of $T_1\wedge T_2$ of mean zero is given by
$$\U_{T_1\wedge T_2}:\mathscr{C}_{m-k}\to \Bbb{R}$$
\begin{equation}\label{wedge}
\U_{T_1\wedge T_2}(R)=\la T_1\wedge T_2,U_R\ra=\la T_2, \omega\wedge U_R\ra+\U_{T_1}(T_2\wedge R)-\U_{T_1}(T_2\wedge \omega^{m-k})
\end{equation}
whenever $R$ is a smooth form and $U_R$ is a smooth quasi-potential of $R$ of mean zero.

\subsection{Holomorphic sections}
Let $X$ be a projective manifold of complex dimension $m$. Given a holomorphic line bundle $\pi:L\to X$ we can find an open cover $\{U_{\alpha}\}$ of $X$ and biholomorphisms $\varphi_{\alpha}:\pi^{-1}(U_{\alpha}) \to U_{\alpha}\times \Bbb{C}$, trivializations of $\pi^{-1}(U_{\alpha}).$ Then the line bundle $L$ is uniquely determined (up to isomorphism) by the transition functions $g_{\alpha\beta}:U_{\alpha} \cap U_{\beta} \to \Bbb{C}$ defined by $g_{\alpha\beta}(z)=(\varphi_{\alpha}\circ\varphi_{\beta}^{-1})_{|_{\pi^{-1}(z)}}.$ The functions $g_{\alpha\beta}$ are non-vanishing holomorphic functions on $U_{\alpha\beta}:=U_{\alpha}\cap U_{\beta}$ satisfying
 \begin{equation*}
 \left\{
\begin{array}{rl}
g_{\alpha\beta}.g_{\beta\alpha}=1\\
g_{\alpha\beta}.g_{\beta\gamma}.g_{\gamma\alpha}=1
\end{array} \right.
\end{equation*}
\indent  Recall that a singular metric $h$ is given by a collection $\{e^{-\psi_{\alpha}}\}$ of functions $\psi_{\alpha}\in L^1(U_{\alpha})$ which are called \textit{weight functions} with respect to the trivializations $\varphi_{\alpha}$  satisfying the compatibility conditions $\psi_{\alpha}=\psi_{\beta}+\log|g_{\alpha\beta}|$ on $U_{\alpha\beta}$. We say that the metric is positively curved if $\psi_{\alpha} \in Psh(U_{\alpha})$ and the metric is called smooth if $\psi_{\alpha}\in C^{\infty}(U_{\alpha})$ for every $\alpha.$ 
In the sequel, we assume that $L$ admits a smooth positively curved metric $h=\{e^{-\psi_{\alpha}}\}.$ Then its curvature form is locally defined by $$\omega:=dd^c\psi_{\alpha}$$ which is a globally well-defined smooth closed $(1,1)$ positive form representing the class $c_1(L)$ where $c_1(L)$ is the image of the  Chern class of $L$ under the mapping $i:H^2(X,\Bbb{Z})\to \cohomology$ induced by the inclusion $i:\Bbb{Z}\to\Bbb{R}$ and $dV:=\omega^m$ induces a volume form on $X.$\\ \indent 
 A \textit{global holomorphic section} $s=\{s_{\alpha}\}$ of $L^{\otimes n}$ is a collection of holomorphic functions satisfying the compatibility conditions $s_{\alpha}=s_{\beta}.g^n_{\alpha\beta}$ on $U_{\alpha\beta}$. We denote the set of all global holomorphic sections by $H^0(X,L^{\otimes n}).$ 
For $s\in H^0(X,L^{\otimes n})$ we also set  $$|| s(x)||_{h_n}:=|s_{\alpha}(x)|e^{-n\psi_{\alpha}(x)}$$ on $U_{\alpha}.$ By compatibility conditions this definition is independent of $\alpha$.\\ \indent
  For $s\in H^0(X,L^{\otimes n})$ we let $[Z_s]$ denote the current of integration along the zero divisor of $s.$ Then by Poincar\'e-Lelong formula, locally we can write $$[Z_s]=dd^c\log|s_{\alpha}|$$ on $U_{\alpha}$ hence, $$[Z_s]=n\omega+dd^c\log||s||_{h_n}$$ on $X$ where the equality follows from compatibility conditions. Thus, we conclude that $\widetilde{Z}_s:=\frac{1}{n}[Z_s]$ is a positive closed $(1,1)$ current representing the class  $c_1(L)$ that is $\Z_{s}\in\mathscr{C}_1$. \\ \indent
  On the other hand, if $X$ is complex homogeneous then for each $s\in H^0(X,L^{\otimes n})$ we let 
  \begin{equation}\label{choice}
  \varphi_s(z):=\int_{z\not=\zeta}(\Z_{s}(\zeta)-\omega(\zeta))\wedge K(z,\zeta).
  \end{equation} Then by Theorem \ref{DS} we have 
$$ \Z_s=\omega+dd^c \varphi_s$$
where $\varphi_s\leq 0$ and 
  \begin{equation*}
  |\int_X\varphi_s dV|\leq C
   \end{equation*}
where $C>0$ independent of $s\in H^0(X,L^{\otimes n})$ and $n\in\Bbb{N}.$ Note that since $\varphi_s\leq 0$ the $L^1(\omega^m)$-norms of $\varphi_s$ are uniformly bounded; thus, by Hartog's lemma \cite{DemBook} the set $\{\varphi_s: s\in\cup_{n=1}^{\infty}\mathcal{S}_{n}\}$ is pre-compact in $L^1(\omega^m).$\\ \indent
In the sequel, we denote the super-potential of $\Z_s$ by $$\U_{\Z_s}:\mathscr{C}_m\to [-\infty,0]$$ 
\begin{equation}\label{super}
\U_{\Z_s}(\nu)=\int_X\varphi_s d\nu
\end{equation}
We remark that above definition of $\U_{\Z_{s}}$ depends on the choice of the Kernel $K(z,\zeta)$ which is fixed throughout this paper. On the other hand, for smooth $\nu\in\mathscr{C}_m$ and $U_{\nu}$ is a smooth quasi-potential of $\nu$ of mean equal to $\int_X\varphi_sdV$ then by (\ref{sym}) we have 
 \begin{equation}\label{number}
 \U_{\Z_s}(\nu)=\la \Z_s,U_{\nu}\ra.
 \end{equation}
\subsection{Global extremal function}\label{EF}
A subset $K$ is said to be (locally) pluripolar if $K\cap U_{\alpha}\subset \{u=-\infty\}$ for some $u\in Psh(U_{\alpha}).$ We say that $K$ is globally \textit{pluripolar} if $K\subset\{\varphi=-\infty\}$ for some $\varphi\in PSH(X,\omega).$ It follows from \cite[Theorem 7.2]{GZ} that locally pluripolar sets are $PSH(X,\omega)$- pluripolar. Throughout this paper we assume that $K\subset X$ is a non-pluripolar compact set. We also let $q:K\to \Bbb{R}$ be a continuous function. 
Following \cite{GZ} we define the \textit{weighted global extremal function} $V^*_{K,q}$ as usc regularization of  
$$V_{K,q}:=\sup\{\varphi\in PSH(X,\omega)|\ \varphi(x)\leq q(x)\ \text{for}\ x\in K\}.$$
 It follows from \cite[\S 5]{GZ} that $V_{K,q}^*\in PSH(X,\omega)$, that is, $$T_{K,q}:=\omega+dd^c V_{K,q}^*$$ defines a positive closed $(1,1)$ current. Throughout this paper we assume that $V_{K,q}$ is continuous so that $V_{K,q}=V_{K,q}^*.$ It is well know that if $K$ is locally regular compact set in $\Bbb{C}^m$  then $V_{K,q}$ is continuous \cite[Proposition 2.16]{Siciak}. The argument in \cite{Siciak} ((see also \cite{BBN}) can be adapted to our setting. Moreover, we have the following version of Siciak-Zaharjuta theorem:\begin{thm}\label{ZS}
Let $(K,q)$ be as above then
$$V_{K,q}=\sup\{\frac{1}{n}\log \|s(x)\|_{h_n}: s\in \cup_{n=1}^{\infty}H^0(X,L^{\otimes n})\ \text{and}\ \max_{x\in K}(\|s(x)\|_{h_n}e^{-nq(x)})\leq 1\}$$
\end{thm}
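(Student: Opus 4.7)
My plan is to establish the two inclusions separately, in the spirit of the classical Siciak--Zaharjuta argument adapted to sections of $L^{\otimes n}$.

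For the inequality $V_{K,q}\ge \sup\{\ldots\}$, let $s\in H^0(X,L^{\otimes n})$ satisfy $\max_{K}(\|s\|_{h_n}e^{-nq})\le 1$ and set $u_s:=\tfrac{1}{n}\log\|s\|_{h_n}$. First I would check that $u_s\in PSH(X,\omega)$: in a trivialization one has $\|s\|_{h_n}=|s_\alpha|e^{-n\psi_\alpha}$, so by the Poincar\'e--Lelong formula $\omega+dd^c u_s=\tfrac{1}{n}[Z_s]\ge 0$. The hypothesis on $K$ reads precisely $u_s\le q$ on $K$, so $u_s$ is admissible in the envelope defining $V_{K,q}$, yielding $u_s\le V_{K,q}$ on $X$.

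For the reverse inequality I would need to show that for every $x_0\in X$ and every $\varepsilon>0$ one can exhibit an $n$ and a section $s\in H^0(X,L^{\otimes n})$ with $\max_{K}(\|s\|_{h_n}e^{-nq})\le 1$ and $\tfrac{1}{n}\log\|s(x_0)\|_{h_n}\ge V_{K,q}(x_0)-\varepsilon$. The plan is to combine Demailly's Bergman-kernel regularization of $\omega$-psh functions with H\"ormander-type $L^2$-estimates for $\bar\partial$ on high tensor powers of $(L,h)$. Specifically, I would equip $L$ with the singular positively-curved metric $h\, e^{-V_{K,q}}$, whose curvature current is $T_{K,q}=\omega+dd^c V_{K,q}\ge 0$; a Bergman-kernel construction then produces sections $\{s_{n,j}\}_j$ of $L^{\otimes n}$ whose normalized envelope $\varphi_n:=\tfrac{1}{2n}\log\sum_j\|s_{n,j}\|_{h_n}^2$ converges to $V_{K,q}$. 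Selecting a single section $s_n$ with $\tfrac{1}{n}\log\|s_n(x_0)\|_{h_n}$ close to $\varphi_n(x_0)$ and rescaling by $c_n:=\max_K(\|s_n\|_{h_n}e^{-nq})$ yields a candidate satisfying the mass bound on $K$.

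The main obstacle is controlling the rescaling factor: one must show that $\tfrac{1}{n}\log c_n\to 0$, equivalently that $\varphi_n\to V_{K,q}$ uniformly on $K$ as $n\to\infty$. This is exactly the adaptation of Siciak's argument indicated in \cite{Siciak, BBN} referenced by the author, and it relies on Hartogs' lemma for qpsh envelopes together with the assumed continuity of $V_{K,q}$. Once uniform convergence on $K$ is in hand, the rescaled section $s_n/c_n$ obeys the required normalization while $\tfrac{1}{n}\log\|s_n/c_n\|_{h_n}(x_0)\ge V_{K,q}(x_0)-2\varepsilon$ for all large $n$; letting $\varepsilon\to 0$ and passing to the supremum over $n$ and $s$ closes the argument.
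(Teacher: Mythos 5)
Your proposal is correct and is essentially the argument the paper itself relies on: the paper offers no independent proof but refers to \cite{GZ} (\S 6), where the Siciak--Zaharjuta theorem is obtained exactly as you describe --- the easy inclusion via Poincar\'e--Lelong, and the reverse inclusion via Demailly's Ohsawa--Takegoshi-based Bergman kernel approximation of $\omega$-psh functions, with the continuity of $V_{K,q}$ (and $V_{K,q}\leq q$ on $K$) controlling the normalizing constant on $K$ just as in your sketch. The only technical point worth recording is that since the curvature $T_{K,q}$ of $h\,e^{-V_{K,q}}$ is merely semipositive, the $L^2$-construction is run with the weight $n\psi_\alpha+(n-1)V_{K,q}$, whose curvature is $\omega+(n-1)T_{K,q}\geq\omega>0$; this changes the asymptotics only by $O(1/n)$ because $V_{K,q}$ is bounded.
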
 
Theorem \ref{ZS} was proved in \cite[\S 6]{GZ} for unweighted case (i.e. $q\equiv0$) and the argument in \cite{GZ} carries over to the setting of a continuous weight function $q.$ 

\subsection{Bernstein-Markov inequality}\label{BM}
Let $K\subset X$ be a non-pluripolar compact set, $q$ be a continuous weight on $K$ and $\tau$ be a measure supported on $K.$ We say that the triple $(K,q,\tau)$ satisfies the weighted \textit{Bernstein-Markov inequality}\label{prob} if 
$$\max_{x\in K}(\|s(x)\|_{h_n}e^{-nq(x)})\leq M_n(\int_K\|s(z)\|^2_{h_n}e^{-2nq(z)}d\tau)^{\frac12}$$
for all $s\in H^0(X,L^{\otimes n})$ and $\displaystyle\limsup_{n\to \infty}(M_n)^{\frac1n}=1$.\\ \indent
It follows that such a triple induces a weighted $L^2$-norm on $H^0(X,L^{\otimes n})$ defined by
\begin{equation}\label{inner}
\|s\|_{L^2_{q,\tau}}:=\big(\int_K\|s(x)\|_{h_n}^2e^{-2nq(x)}d\tau(x)\big)^{\frac12}
\end{equation}
and we fix an orthonormal basis $\{S_j^n\}_{j=1}^{d_n}$ for $H^0(X,L^{\otimes n})$ with respect to the inner product defined by (\ref{inner}). Then a section $s_n\in H^0(X,L^{\otimes n})$ can be written uniquely as 
 $$s_n(x)=\sum_{j=1}^{d_n}a^{(n)}_jS_j^n(x).$$
where $d_n:=\dim H^0(X,L^{\otimes n}).$  Recall that since $L$ is positive there exists a constant $C>0$ such that $C^{-1}n^m\leq d_n\leq Cn^m$ for $n\in\Bbb{N}.$\\ \indent
 In the sequel, we assume that $a^{(n)}_j$ are i.i.d. complex (or real) valued random variables whose distribution $\prob=\phi(z)d\lambda$ is absolutely continuous with respect to the Lebesgue measure $\lambda$ on $\C$ and its density function satisfies  
$$0\leq \phi(z)\leq C\ \text{for all}\ z\in\C$$
$$\prob\{z\in\Bbb{C}: |z|>R\}\leq\frac{C}{(\log R)^{\rho}} \ \ \text{for sufficiently large}\ R>0$$ 
where $\rho>\dim X+1.$\\ \indent
 In what follows, we consider the coefficients $a^{(n)}:=\{a^{(n)}_j\}_j$ as points in $\C^{d_n}$ and $(\C^{d_n},\probn)$ as a probability space where $\probn$ is the $d_n$-fold product measure induced by $\prob$. We also denote $$\|a^{(n)}\|:=(\sum_{j=1}^{d_n} |a_j^{(n)}|^2)^{\frac12}$$ is the $\ell^2$ norm on $\C^{d_n}.$ Finally, we define $\mathcal{C}:=\prod_{n=1}^{\infty}\C^{d_n}$ endowed with the product measure $\p:=\prod_{n=1}^{\infty}\probn$ and consider the probability space $(\mathcal{C},\p).$ The following lemma will be useful in the sequel:
\begin{lem}\label{limsup}
For $\p$-a.e.  $\{a^{(n)}\}_{n\geq1}\in \mathcal{C}$ 
$$\lim_{n\to \infty}\frac1n\log\|a^{(n)}\|=0.$$ 
\end{lem}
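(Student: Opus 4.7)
The plan is to prove the two inequalities $\limsup_{n\to\infty} \frac{1}{n}\log\|a^{(n)}\| \leq 0$ and $\liminf_{n\to\infty} \frac{1}{n}\log\|a^{(n)}\| \geq 0$ separately, each via a direct Borel--Cantelli argument. Throughout, I use that $d_n \leq C n^m$ with $m = \dim_{\mathbb{C}} X$ and the decisive numerical fact that $\rho > m+1$ in hypothesis (\ref{h2}).

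For the upper bound, I would use $\|a^{(n)}\| \leq \sqrt{d_n}\,\max_{1\leq j \leq d_n}|a_j^{(n)}|$. Fix $\varepsilon>0$ and estimate, using a union bound together with (\ref{h2}),
\[
\mathbf{P}_n\!\left\{\max_{j}|a_j^{(n)}| > e^{n\varepsilon}\right\} \leq d_n\,\mathbf{P}\{|a| > e^{n\varepsilon}\} \leq C n^m \cdot \frac{C}{(n\varepsilon)^\rho} = C'\, n^{m-\rho}.
\]
Since $\rho > m+1$, the series $\sum_n n^{m-\rho}$ converges, so Borel--Cantelli gives that $\mathbf{P}$-almost surely one has $\max_j |a_j^{(n)}| \leq e^{n\varepsilon}$ for all sufficiently large $n$. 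Then $\frac{1}{n}\log \|a^{(n)}\| \leq \frac{1}{2n}\log(Cn^m) + \varepsilon \to \varepsilon$, and letting $\varepsilon \downarrow 0$ along a countable sequence yields $\limsup \frac{1}{n}\log\|a^{(n)}\| \leq 0$ almost surely.

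For the lower bound, since $\|a^{(n)}\| \geq |a_1^{(n)}|$, it suffices to control how often a single coordinate is very small. Using the bounded density hypothesis (\ref{h1}), for any $\delta > 0$
\[
\mathbf{P}\{|a_1^{(n)}| < \delta\} \leq C \cdot \lambda(\{|z|<\delta\}) \leq C'\delta^2
\]
(with the analogous $C'\delta$ bound in the real case). Taking $\delta = e^{-n\varepsilon}$, the probabilities $\mathbf{P}_n\{\|a^{(n)}\| < e^{-n\varepsilon}\} \leq C' e^{-2n\varepsilon}$ are summable, so Borel--Cantelli again yields $\|a^{(n)}\| \geq e^{-n\varepsilon}$ eventually, almost surely. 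Hence $\liminf \frac{1}{n}\log\|a^{(n)}\| \geq -\varepsilon$, and letting $\varepsilon \downarrow 0$ completes the proof.

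There is no real obstacle; the lemma is essentially a quantitative Borel--Cantelli exercise. The only thing to verify is that the exponents line up, and indeed the numerical assumption $\rho > m+1$ is precisely what makes the tail sum $\sum n^{m-\rho}$ summable for the upper bound, while the density bound gives much better than what is needed for the lower bound.
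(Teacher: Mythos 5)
Your proof is correct and follows essentially the same route as the paper: a union bound plus the tail condition (\ref{h2}) with $\rho>m+1$ and Borel--Cantelli for the upper bound, and the bounded density (\ref{h1}) plus Borel--Cantelli for the lower bound. The only cosmetic difference is your choice of thresholds $e^{\pm n\varepsilon}$ with a final $\varepsilon\downarrow 0$ over a countable sequence, where the paper uses the subexponential cutoff $e^{n^{1-\epsilon}}$ and the cutoff $1/n$ to reach the same conclusions directly.
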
 
\begin{proof}
We fix $\epsilon>0$ such that $\rho(1-\epsilon)>m+1.$ First, we show that with probability one,  $$\|a^{(n)}\|\leq d_ne^{n^{1-\epsilon}}$$ for sufficiently large $n.$ Indeed,
$$\prob\{a_j^{(n)}\in\C: |a_j^{(n)}|>e^{n^{1-\epsilon}}\}\leq \frac{C}{n^{(1-\epsilon)\rho}}$$ which implies that 
$$\probn\{a^{(n)}\in\C^{d_n}: \|a^{(n)}\|>d_ne^{n^{1-\epsilon}}\}\leq \frac{Cd_n}{n^{(1-\epsilon)\rho}}$$ where the later defines a summable sequence. Thus, the claim follows from Borel-Cantelli lemma. 
Since $d_n=O(n^m)$ we conclude that
$$\limsup_{n\to \infty}\frac1n\log\|a^{(n)}\|\leq 0$$
with probability one. \\ \indent
On the other hand, since $|\phi(z)|\leq C$ by independence of $a_j^{(n)}$'s
\begin{eqnarray*}
\probn\{a^{(n)}\in\C^{d_n}:\|a^{(n)}\|<\frac1n\}\leq\prob\{z\in\Bbb{C}:|z|<\frac1n\} &\leq& C \lambda\{z\in\C:|z|<\frac1n\}\\
& = & \frac{C\pi}{n^2}
\end{eqnarray*}
thus, again, by using Borel-Cantelli lemma we obtain $$\|a^{(n)}\|\geq\frac{1}{n}$$ with probability one. Hence, $$\liminf_{n\to\infty}\frac1n\log\|a^{(n)}\|\geq 0.$$
\end{proof}

\subsection{Bergman kernel asymptotics}
A triple $(K,\tau,q)$ satisfying the Bernstein-Markov inequality induces an inner product (\ref{inner}) on the space of global sections with values in $L^{\otimes n}.$ Recall that \textit{Bergman kernel} for the Hilbert space $H^0(X,L^{\otimes n})$ is the integral kernel of the orthogonal projection from $L^2$-space of global sections with values in $L^{\otimes n}$ onto $H^0(X,L^{\otimes n}).$ It is well-known that it can be represented as a holomorphic section 
$$S_n(x,y)=\sum_{j=1}^{d_n}S^n_j(x)\otimes \overline{S_j^n(y)}$$
of the line bundle $L^{\otimes n}\boxtimes\overline{L^{\otimes n}}$ over $X\times X.$ The point-wise norm of restriction of $S_n(x,y)$ to the diagonal is given by 
$$\|S_n(x,x)\|_{h_n}=\sum_{j=1}^{d_n}\|S^n_j(x)\|^2_{h_n}.$$ 
The following result is well-known in the $\Bbb{C}^m$ setting \cite[Lemma 3.2]{BloomS} and the argument in \cite{BloomS} can be adapted to our setting. (cf. \cite[Theorem 1.5]{Berman} for the case $K=X$).  
\begin{prop} \label{bergman}
Let $S_n(z,z)$ denote the Bergman kernel on the diagonal then
$$\lim_{n\to \infty}\frac{1}{2n}\log \|S_n(x,x)\|_{h_n}=V_{K,q}(x)$$
for every $x\in X.$ Moreover, if $V_{K,q}$ is continuous then the convergence is uniform on $X$.
\end{prop}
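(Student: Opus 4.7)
\textbf{Proof proposal for Proposition \ref{bergman}.}

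The plan is to establish matching upper and lower bounds for $B_n(x) := \frac{1}{2n}\log \|S_n(x,x)\|_{h_n}$ using the extremal characterization of the Bergman kernel together with Theorem \ref{ZS} and the Bernstein--Markov inequality. The crucial reproducing property is that for $s = \sum_j a_j S_j^n \in H^0(X,L^{\otimes n})$, Cauchy--Schwarz in the orthonormal basis gives
$$\|s(x)\|_{h_n}^2 \le \|s\|_{L^2_{q,\tau}}^2 \cdot \|S_n(x,x)\|_{h_n},$$
with equality achieved by the coherent state $s_x := \sum_j \overline{S_j^n(x)}\, S_j^n / \sqrt{\|S_n(x,x)\|_{h_n}}$, which has $L^2_{q,\tau}$-norm one and satisfies $\|s_x(x)\|_{h_n}^2 = \|S_n(x,x)\|_{h_n}$.

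For the upper bound, apply the Bernstein--Markov inequality to $s_x$: $\max_K \|s_x\|_{h_n} e^{-nq} \le M_n$, so $s_x/M_n$ is admissible in the Siciak--Zaharjuta class of Theorem \ref{ZS}. Therefore $B_n(x) = \frac{1}{n}\log\|s_x(x)\|_{h_n} \le V_{K,q}(x) + \frac{1}{n}\log M_n$, and since $M_n^{1/n}\to 1$ this yields $B_n \le V_{K,q} + o(1)$ uniformly on $X$.

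For the lower bound, any $s \in H^0(X,L^{\otimes n})$ with $\max_K \|s\|_{h_n}e^{-nq} \le 1$ satisfies $\|s\|_{L^2_{q,\tau}}^2 \le \tau(K)$, so combining with the Cauchy--Schwarz bound gives
$$B_n(x) \ge \frac{1}{n}\log \|s(x)\|_{h_n} - \frac{1}{2n}\log \tau(K).$$
Given $\epsilon>0$, Theorem \ref{ZS} produces some $n_0$ and an admissible $s_0 \in H^0(X, L^{\otimes n_0})$ with $\frac{1}{n_0}\log\|s_0(x)\|_{h_{n_0}} \ge V_{K,q}(x) - \epsilon$. To propagate this estimate to every sufficiently large degree $n$, observe that $s_0^{\otimes k}$ is admissible in degree $kn_0$ and preserves the normalized value. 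For the residues $n = k n_0 + r$ with $0 \le r < n_0$, use that positivity of $L$ makes $L^{\otimes r}$ base-point-free for $r \ge r_0$, so one can pick auxiliary sections $\sigma_r$ with $\sigma_r(x)\neq 0$ and uniformly controlled $\max_K\|\sigma_r\|_{h_r}e^{-rq}$; then $s_0^{\otimes k}\otimes \sigma_r$ (suitably renormalized) is admissible and its $\frac{1}{n}\log$-value at $x$ tends to $V_{K,q}(x)-\epsilon$. This yields $\liminf_n B_n(x) \ge V_{K,q}(x)-\epsilon$, hence the pointwise lower bound.

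For uniform convergence, note that in a local trivialization with weight $\psi_\alpha$, the identity $B_n + \psi_\alpha = \frac{1}{2n}\log\sum_j |S^n_{j,\alpha}|^2$ shows $B_n \in PSH(X,\omega)$. Thus $\{B_n\}$ is a family of $\omega$-psh functions, uniformly bounded above by $V_{K,q}+o(1)$, converging pointwise to the continuous limit $V_{K,q}$. A Hartogs--Dini-type argument for $\omega$-psh functions on the compact manifold $X$ then upgrades pointwise to uniform convergence. The main obstacle I expect is the lower bound for \emph{every} large $n$ rather than just a subsequence $\{kn_0\}$: one must ensure that the interpolating sections $\sigma_r$ exist in all degrees $r<n_0$, non-vanishing at the relevant point, with uniform control on $K$ — this is where ampleness of $L$ (via positivity) is doing real work, and requires some care to make uniform in $x$.
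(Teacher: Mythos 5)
First, a remark on the comparison: the paper does not actually prove Proposition \ref{bergman}; it cites \cite[Lemma 3.2]{BloomS} (and \cite[Theorem 1.5]{Berman} for $K=X$) and says the argument adapts. Your strategy — extremal characterization of $\|S_n(x,x)\|_{h_n}$, Bernstein--Markov for the upper bound, Theorem \ref{ZS} plus domination of the $L^2$-norm by the weighted sup-norm for the lower bound, and the power/auxiliary-section trick to pass from one degree $n_0$ to all large degrees — is exactly the standard route behind the cited lemma, and the pointwise statement is essentially correct as you set it up (the residue issue you flag is real but handled the way you indicate, using base-point-freeness of $L^{\otimes r}$ for $r\geq r_0$ and shifting the residue into a fixed window $[r_0,r_0+n_0)$).

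The genuine gap is the last step. You deduce uniform convergence from: $B_n\in PSH(X,\omega)$, $B_n\leq V_{K,q}+o(1)$ uniformly, and $B_n\to V_{K,q}$ pointwise with $V_{K,q}$ continuous, via a ``Hartogs--Dini-type argument.'' No such general principle exists: Hartogs' lemma only controls the sequence from above (which you already have for free from Bernstein--Markov), and Dini requires monotonicity, which $B_n$ does not have. The implication is in fact false for $\omega$-psh functions: on $\mathbb{P}^1$ take $u_n(z)=\max\{\log(|z-a_n|/r_n),-\delta\}$ with $a_n\to a$, $a_n\neq a$ and $0<r_n<|a_n-a|/2$, $r_n\to 0$; each $u_n$ is $\omega_{FS}$-psh, $-\delta\leq u_n\leq 0$, and $u_n\to 0$ at every point, yet $u_n(a_n)=-\delta$, so convergence is not uniform. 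So pointwise convergence from below must be upgraded by hand. The standard fix (and the content of \cite[Lemma 3.2]{BloomS}) is to make your lower bound uniform in $x$: work with $\Phi_n(x):=\sup\{\|s(x)\|_{h_n}:\ \max_K\|s\|_{h_n}e^{-nq}\leq 1\}$, which is continuous (the weighted sup-norm unit ball is compact since $K$ is non-pluripolar) and supermultiplicative, $\Phi_{n+m}\geq\Phi_n\Phi_m$; given $\epsilon>0$, for each $x$ pick $n_x$ with $\frac{1}{n_x}\log\Phi_{n_x}(x)>V_{K,q}(x)-\epsilon$, use continuity of both sides to get a neighborhood, extract a finite subcover of $X$, and take $N$ a common multiple of the finitely many $n_{x_i}$ so that $\frac1N\log\Phi_N\geq V_{K,q}-2\epsilon$ on all of $X$; then propagate to all large $n$ exactly by your power/auxiliary-section argument, noting that for $r\geq r_0$ one has $\Phi_r\geq c_r>0$ on $X$ by base-point-freeness, continuity and compactness (this also resolves your worry about a single $\sigma_r$ vanishing somewhere: the bound $\Phi_r\geq c_r$ is what you actually need, not one section nonvanishing everywhere). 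Combined with $\Phi_n(x)^2/\tau(K)\leq\|S_n(x,x)\|_{h_n}\leq M_n^2\,\Phi_n(x)^2$, this gives the uniform convergence; without this compactness argument the proof is incomplete.
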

The next observation will be useful in the sequel.

\begin{prop}\label{BL}
For $\mu$-a.e. $\{s_n\}\in\mathcal{S}_{\infty}$ $$\limsup_{n\to\infty} \frac1n\log \|s_n(x)\|_{h_n}\leq V_{K,q}(x)$$ for every $x\in X.$
\end{prop}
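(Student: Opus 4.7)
The plan is to combine the Bergman kernel asymptotics from Proposition \ref{bergman} with the coefficient growth estimate from Lemma \ref{limsup}, via an elementary Cauchy--Schwarz estimate in the chosen orthonormal basis.

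First I would write $s_n = \sum_{j=1}^{d_n} a_j^{(n)} S_j^n$ and apply Cauchy--Schwarz in $\C^{d_n}$ to the pointwise norm, which yields
\[
\|s_n(x)\|_{h_n} \;\leq\; \|a^{(n)}\|\,\Bigl(\sum_{j=1}^{d_n}\|S_j^n(x)\|_{h_n}^2\Bigr)^{1/2}
\;=\; \|a^{(n)}\|\,\|S_n(x,x)\|_{h_n}^{1/2}.
\]
Taking $\frac{1}{n}\log$ on both sides gives
\[
\frac{1}{n}\log\|s_n(x)\|_{h_n} \;\leq\; \frac{1}{n}\log\|a^{(n)}\| \;+\; \frac{1}{2n}\log\|S_n(x,x)\|_{h_n}.
\]

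Next I would invoke Lemma \ref{limsup}, which gives a $\mu$-full-measure subset of $\mathcal{S}_\infty$ on which $\frac{1}{n}\log\|a^{(n)}\|\to 0$ (note this bound is independent of $x$). On the same set, Proposition \ref{bergman}, together with our standing assumption that $V_{K,q}$ is continuous, ensures that $\frac{1}{2n}\log\|S_n(x,x)\|_{h_n}\to V_{K,q}(x)$ uniformly on $X$. Passing to $\limsup$ in the displayed inequality then yields the desired pointwise bound simultaneously for every $x\in X$, almost surely.

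There is essentially no obstacle here: the only mildly delicate point is that we want the conclusion to hold for every $x\in X$ on a single full-measure event, but this is automatic because the random factor $\frac{1}{n}\log\|a^{(n)}\|$ is independent of $x$ and the deterministic Bergman convergence is uniform in $x$. Thus the exceptional null set is supplied entirely by Lemma \ref{limsup}, and no further measurability or exhaustion argument is needed.
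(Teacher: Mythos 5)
Your proof is correct and is essentially the paper's own argument: write $s_n$ in the orthonormal basis, apply Cauchy--Schwarz to split $\frac1n\log\|s_n(x)\|_{h_n}$ into the coefficient term and the Bergman kernel term, and conclude via Lemma \ref{limsup} and Proposition \ref{bergman}. Your extra remark about handling ``for every $x$'' on one full-measure event is a fair elaboration (pointwise convergence of the deterministic Bergman term already suffices), but it does not change the route.
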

\begin{proof}

We write  $s_n=\sum_{j=1}^{d_n}a^{(n)}_jS_j^n$ then by Cauchy-Schwarz inequality
$$\frac1n \log\|s_n(x)\|_{h_n}\leq \frac{1}{2n}\log\sum_{j=1}^{d_n}|a^{(n)}_j|^2 + \frac{1}{2n}\log\sum_{j=1}^{d_n}\|S_j^n(x)\|_{h_n}^2$$ thus the assertion follows from Lemma \ref{limsup} and Proposition \ref{bergman}.
\end{proof}

\section{Expected distribution of zeros}
In this section we prove the first part of Theorem \ref{main}. For this, we assume that $X$ is merely projective and $L\to X$ is a positive holomorphic line bundle. The following lemma is an improved version of  \cite[Proposition 7.2]{BloomL}:

\begin{lem}\label{help}
Let $u\in \C^{d_n}$ be a unit vector then
$$\int_{\C^{d_n}}|\log|\langle a,u\rangle|| d\probn(a)=O(n^{1-\epsilon})$$ for some small $\epsilon>0.$
\end{lem}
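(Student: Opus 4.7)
Write $Z:=\langle a,u\rangle$ and split the integral as
\[
\int_{\C^{d_n}}|\log|Z||\,d\probn(a) \;=\; \int_{\{|Z|>1\}}\log|Z|\,d\probn \;+\; \int_{\{|Z|\leq 1\}}(-\log|Z|)\,d\probn,
\]
and bound each piece separately. The first (tail) piece is handled via the hypothesis (\ref{h2}), while the second (near-zero) piece is handled via an $L^\infty$ bound on the density of $Z$.

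\textbf{Tail piece.} By Cauchy--Schwarz, $|Z|\leq\|a\|$, so on $\{|Z|>1\}$ the integrand is dominated by $\log(1+\|a\|)\leq \tfrac12\log(1+d_n)+\log^+\max_j|a_j^{(n)}|$. Hypothesis (\ref{h2}) combined with a union bound gives $\probn\{\log^+\max_j|a_j^{(n)}|>R\}\leq Cd_nR^{-\rho}$ for $R$ large. Integrating this tail with the cutoff $R_0=(Cd_n)^{1/\rho}$ yields $\Bbb{E}\bigl[\log^+\max_j|a_j^{(n)}|\bigr]=O(d_n^{1/\rho})$. Since $d_n=O(n^m)$ and $\rho>m+1$, this produces $O(n^{m/\rho})=O(n^{1-\epsilon})$ with $\epsilon:=(\rho-m)/\rho>0$.

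\textbf{Near-zero piece.} The key step is a uniform bound on the density $\phi_Z$ of $Z$ on $\C$. Since $\|u\|=1$, there is $j_*$ with $|u_{j_*}|\geq 1/\sqrt{d_n}$. The push-forward of $\prob$ under $a\mapsto u_{j_*}a$ has density $|u_{j_*}|^{-2}\phi(\cdot/u_{j_*})$ on $\C$, which by (\ref{h1}) is bounded by $C|u_{j_*}|^{-2}\leq Cd_n$. Because $Z=u_{j_*}a_{j_*}^{(n)}+\sum_{j\neq j_*}u_ja_j^{(n)}$ is a sum of independent variables, its density is the convolution of the individual densities, so Young's inequality gives $\|\phi_Z\|_\infty\leq Cd_n$. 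Now choose $\delta:=1/\sqrt{d_n}$ and split $\{|Z|\leq 1\}=\{|Z|\leq\delta\}\cup\{\delta<|Z|\leq 1\}$. On the outer annulus $-\log|Z|\leq \tfrac12\log d_n=O(\log n)$. On the inner disc, polar integration and the density bound give
\[
\int_{|z|\leq \delta}(-\log|z|)\phi_Z(z)\,d\lambda(z)\;\leq\; Cd_n\cdot 2\pi\int_0^\delta r(-\log r)\,dr\;=\;O(d_n\delta^2|\log\delta|)\;=\;O(\log n).
\]
Summing all contributions yields a total of $O(n^{1-\epsilon})+O(\log n)=O(n^{1-\epsilon})$, as required.

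\textbf{Main obstacle.} The delicate ingredient is the density bound $\|\phi_Z\|_\infty\lesssim d_n$; it rests on the elementary observation that some coordinate of the unit vector $u$ has modulus at least $1/\sqrt{d_n}$, combined with the fact that convolution with probability densities does not increase the $L^\infty$-norm. The precise hypothesis $\rho>\dim_{\C}X+1$ is needed exactly to make the tail exponent $m/\rho$ strictly less than $1$; otherwise the tail contribution would no longer be subpolynomial in $n$.
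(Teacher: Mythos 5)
Your proof is correct, and it assembles the same two basic ingredients as the paper but organizes them differently. The paper cuts at the level $|\log|\la a,u\ra||=mn^{1-\epsilon}$: it shows that the two extreme regions contribute only $O(1)$ (the upper tail via $|\la a,u\ra|\leq\|a\|$, a union bound over coordinates and (\ref{h2}), summed over shells of width $n^{1-\epsilon}$; the lower tail via the small-ball estimate $\probn\{|\la a,u\ra|\leq t\}\leq Cd_nt^2$, obtained from (\ref{h1}) and the observation that some $|u_j|\geq d_n^{-1/2}$ through the Bloom--Levenberg change of variables), and then bounds the remaining middle band trivially by $mn^{1-\epsilon}$ since $\probn$ is a probability measure. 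You instead cut at $|\la a,u\ra|=1$ and estimate both expectations directly: the positive part by $\Bbb{E}[\log^+\max_j|a^{(n)}_j|]=O(d_n^{1/\rho})$ computed from the distribution function, and the negative part by $O(\log n)$ using the density bound $\|\phi_Z\|_\infty\leq Cd_n$, which encodes exactly the same small-ball information as the paper's probability estimate for $\{|\la a,u\ra|\leq e^{-kn^{1-\epsilon}}\}$. Your bookkeeping yields $O(n^{m/\rho}+\log n)$, in fact slightly sharper than the $O(n^{m/(\rho-1)})$ implicit in the paper's choice $(\rho-1)(1-\epsilon)\geq m$; both of course satisfy the stated $O(n^{1-\epsilon})$. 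Two small points to tighten: since some $u_j$ may vanish, the remaining sum $\sum_{j\neq j_*}u_ja^{(n)}_j$ need not have a density, so phrase the convolution step as convolving the bounded density of $u_{j_*}a^{(n)}_{j_*}$ with the law (a probability measure) of the rest, which still gives $\|\phi_Z\|_\infty\leq Cd_n$; and, exactly as with the paper's own proof, the case of real-valued coefficients requires the one-dimensional analogue of the density argument (projecting $\la a,u\ra$ onto a suitable real direction, with $|u_{j_*}|^{-1}$ in place of $|u_{j_*}|^{-2}$).
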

\begin{proof}
We fix $\epsilon>0$ such that $(\rho-1)(1-\epsilon)\geq m.$ First, we prove that
\begin{equation}\label{es1}
\int_{\{\log|\langle a,u\rangle|>m n^{1-\epsilon}\}}\log|\langle a, u\rangle| d\probn(a)\leq C_m
\end{equation}
 where $C_m>0$ is a constant which depends only on $m.$ 
 Note that for $k\in\Bbb{N}$
 $$\{a\in\C^{d_n}:\log|\langle a,u\rangle|>kn^{1-\epsilon}\}\subset \{a\in\C^{d_n}:\|a\|>e^{kn^{1-\epsilon}}\}\subset \bigcup_{j=1}^{d_n}\{a_j\in \Bbb{C}: |a_j|\geq \frac{e^{kn^{1-\epsilon}}}{\sqrt{d_n}}\}$$
 Hence, by (\ref{h2}) for sufficiently large $n$ we obtain 
 \begin{equation}\label{rk}r_k(n):=\probn\{a\in\C^{d_n}:\log|\langle a,u\rangle| >kn^{1-\epsilon}\}\leq \frac{Cn^m}{k^{\rho}n^{(1-\epsilon)\rho}}
 \end{equation}
 for every $k\geq m.$ Now, letting
 $$R_k(n):=\{a\in\C^{d_n}: kn^{1-\epsilon}<\log|\langle a,u\rangle| \leq(k+1)n^{1-\epsilon}\}$$
 then $$\probn(R_k(n))=r_k(n)-r_{k+1}(n)$$ and 
we see that
\begin{eqnarray*}
\int_{\{\log|\langle a,u\rangle|>m n^{1-\epsilon}\}} \log|\langle a,u\rangle| d\probn(a) &\leq& \sum_{k=m}^{\infty}(k+1) n^{1-\epsilon}(r_k(n)-r_{k+1}(n))\\
&\leq& n^{1-\epsilon}((m+1)r_m(n)+\sum_{k=m+1}^{\infty}r_k(n))\\
&\leq& n^{1-\epsilon}((m+1)\frac{Cn^m}{m^{\rho}n^{\rho(1-\epsilon)}}+\frac{Cn^m}{n^{\rho(1-\epsilon)}}\sum_{k=m+1}^{\infty}k^{-\rho})\\
&\leq& \frac{Cn^m}{n^{(\rho-1)(1-\epsilon)}}(\frac{m+1}{m^{\rho}}+\sum_{k=m+1}^{\infty}k^{-\rho}). \\
\end{eqnarray*}
Since $(\rho-1)(1-\epsilon)\geq m$ the claim follows.\\ \indent
Next, we show that 
\begin{equation}\label{es2}
\int_{\{\log|\langle a,u\rangle|<-m n^{1-\epsilon}\}}|\log|\langle a, u\rangle|| d\probn(a)\leq C_m
\end{equation}
 where $C_m>0$ is a constant which depends only on $m.$ To this end we show that
 $$\l_k(n):=\probn\{a\in\C^{d_n}:|\la a,u\ra|\leq e^{-kn^{1-\epsilon}}\}\leq Cd_ne^{-2kn^{1-\epsilon}}.$$
where $C>0$ constant as in (\ref{h1}), in particular, independent of $n$. Indeed, since $u$ is a unit vector we may assume that $|u_1|\geq\frac{1}{\sqrt{d_n}}$ where $u=(u_1,u_2,\dots,u_{d_n}).$ Following \cite[Lemma 2.8]{BloomL} we apply the change of variables $\alpha_1=\sum_{i=1}^{d_n}a_iu_i, \alpha_2=a_2,\dots,\alpha_{d_n}=a_{d_n}$ and we obtain  
 \begin{eqnarray*}
\l_k &=& \int_{\C^{d_n-1}}\int_{|\alpha_1|\leq  e^{-kn^{1-\epsilon}}}\frac{1}{|u_1|^2}\phi(\frac{\alpha_1-\alpha_2u_2-\dots-\alpha_{d_n}u_{d_n}}{u_1})\phi(\alpha_2)\dots\phi(\alpha_{d_n})d\lambda(\alpha_1)\dots d\lambda(\alpha_{d_n}) \\
&\leq& Cd_ne^{-2kn^{1-\epsilon}}
 \end{eqnarray*}
 Next, for $k\geq m$ we let $$D_k(n):=\{a\in \C^{d_n}:e^{-(k+1)n^{1-\epsilon}}< |\la a,u\ra|\leq e^{-kn^{1-\epsilon}}\}$$ then 
 \begin{equation}\label{dk}
 \probn(D_k(n))\leq \probn\{a\in\C^{d_n}:|\la a,u\ra|\leq e^{-kn^{1-\epsilon}}\}\leq Cn^me^{-2kn^{1-\epsilon}}
 \end{equation}
and
\begin{eqnarray*}
\int_{\{\log|\langle a,u\rangle|<-m n^{1-\epsilon}\}}|\log|\langle a, u\rangle|| d\probn(a) &=& \sum_{k=m}^{\infty}\int_{D_k}|\log |\la a,u\ra||d\probn \\
&\leq& \sum_{k=m}^{\infty}(k+1)n^{m+1-\epsilon}e^{-2kn^{1-\epsilon}}\\
&\leq& n^{m+1-\epsilon}\int_m^{\infty}(x+1)e^{-2xn^{1-\epsilon}}dx\\
&\leq& Ce^{-2mn^{1-\epsilon}}n^m
\end{eqnarray*}
where $C>0$ depends only on $m$ which proves (\ref{es2}). \\ \indent
Combining (\ref{es1}) and (\ref{es2}) we conclude that
 $$\int_{\{|\log|\langle a,u\rangle||>m n^{1-\epsilon}\}}|\log|\langle a, u\rangle| |d\probn(a)\leq C_m$$
since $\probn$ is a probability measure this finishes the proof.
\end{proof}

\begin{thm}\label{expected}
Let $X$ be a projective manifold, $L\to X$ be a positive holomorphic line bundle and $K\subset X$ is a locally regular compact set together with a continuous function $q:K\to \Bbb{R}$ then
$$\Bbb{E}[\widetilde{Z}_s] \to T_{K,q}$$ in the sense of currents as $n\to \infty.$
\end{thm}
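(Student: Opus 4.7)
The plan is to reduce the current convergence to uniform convergence of potentials via the Poincaré-Lelong formula. Write $\widetilde{Z}_{s_n}=\omega+\frac{1}{n}dd^c\log\|s_n\|_{h_n}$, so for any smooth $(m-1,m-1)$ test form $\Phi$,
$$\langle\widetilde{Z}_{s_n},\Phi\rangle-\langle\omega,\Phi\rangle=\frac{1}{n}\int_X\log\|s_n(x)\|_{h_n}\,dd^c\Phi(x).$$
It therefore suffices to show that $\frac{1}{n}\mathbb{E}[\log\|s_n(x)\|_{h_n}]\to V_{K,q}(x)$ uniformly on $X$, since then a Fubini swap and passage to the limit produce $\langle\mathbb{E}[\widetilde{Z}_{s_n}],\Phi\rangle\to\langle\omega+dd^c V_{K,q},\Phi\rangle=\langle T_{K,q},\Phi\rangle$.

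The key step is a pointwise decomposition of the random potential. Fix $x\in X$ and work in a local trivialization of $L^{\otimes n}$ with weight $n\psi$; in this chart write $s_n(x)=\sum_j a_j^{(n)}\tilde S_j^n(x)$ and set $v_n(x):=(\tilde S_1^n(x),\dots,\tilde S_{d_n}^n(x))$. Off the (measure-zero) base locus, the unit vector $u_n(x):=\overline{v_n(x)}/\|v_n(x)\|$ is well defined, and a direct computation together with the definition of $\|S_n(x,x)\|_{h_n}$ gives
$$\log\|s_n(x)\|_{h_n}=\log\bigl|\langle a^{(n)},u_n(x)\rangle\bigr|+\tfrac{1}{2}\log\|S_n(x,x)\|_{h_n}.$$
This separates the integrand into a purely probabilistic term and a purely deterministic Bergman-kernel term. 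Applying Lemma \ref{help} to the unit vector $u_n(x)$ yields the uniform bound $\bigl|\mathbb{E}[\log|\langle a^{(n)},u_n(x)\rangle|]\bigr|=O(n^{1-\epsilon})$, so after division by $n$ the random term contributes $O(n^{-\epsilon})\to 0$ uniformly in $x$. Proposition \ref{bergman} handles the remaining piece: $\frac{1}{2n}\log\|S_n(x,x)\|_{h_n}\to V_{K,q}(x)$ uniformly, using continuity of $V_{K,q}$.

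To close the argument I must justify interchanging $\mathbb{E}$ with the spatial pairing. For each fixed $n$, the decomposition together with Lemma \ref{help} gives $\mathbb{E}[|\log\|s_n(x)\|_{h_n}|]\le C(n)$ uniformly in $x$, which is enough to invoke Fubini against the bounded distribution $dd^c\Phi$ on the compact manifold $X$. Combining the uniform convergence of the averaged potential with this swap yields the claimed convergence $\mathbb{E}[\widetilde{Z}_{s_n}]\to T_{K,q}$. The main technical obstacle was really absorbed into Lemma \ref{help}, whose proof is where the logarithmic-tail hypothesis (\ref{h2}) on the coefficient distribution is needed; once that uniform moment estimate over unit vectors is available, the remainder of the theorem is a routine combination of Poincaré-Lelong, Bergman-kernel asymptotics, and a Fubini interchange.
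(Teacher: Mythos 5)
Your proof is correct and follows essentially the same route as the paper: Poincar\'e--Lelong, the splitting $\log\|s_n(x)\|_{h_n}=\log|\langle a^{(n)},u_n(x)\rangle|+\tfrac12\log\|S_n(x,x)\|_{h_n}$, Lemma \ref{help} for the random term, Proposition \ref{bergman} for the Bergman term, and a Fubini interchange justified by the uniform $O(n^{1-\epsilon})$ moment bound. The only difference is cosmetic: you phrase the decomposition intrinsically via the Bergman kernel on all of $X$, whereas the paper works in a local frame with $f=|f|u$, which amounts to the same estimate.
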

\begin{proof}
We need to show that for every smooth $(m-1,m-1)$ form $\Phi$ on $X$ 
$$\int_{\mathcal{S}_n}\la \Z_{s},\Phi\ra d\mu_n(s) \to\la T_{K,q},\Phi\ra$$
as $n\to \infty.$
 We may assume that $supp(\Phi)\subset U_{\alpha}$ for some $\alpha.$ The general case follows from covering the $supp(\Phi)$ by $U_{\alpha}$'s and using the compatibility conditions. Following \cite{SZ}, let $e_{\alpha}:U_{\alpha}\to L$ be a holomorphic frame for $L$ over $U_{\alpha}$ then for $s\in H^0(X,L^{\otimes n})$ we may write $$s=\sum_{j=1}^{d_n}a_jf_je_L^{\otimes n}$$ on $U_{\alpha}$ and denote $$\la a,f\ra:=\sum_{j=1}^{d_n}a_jf_j$$
 where $S^n_j=f_je^{\otimes n}_L$ and $f=(f_1,f_2,\dots,f_{d_n}).$ Then by Poincar\'e Lelong formula
 $$\Z_s=\frac1n dd^c\log|\la a,f\ra|$$
 on $U_{\alpha}.$ Evidently,
 $$\log|\la a,f\ra|=\log|\la a,u\ra|+\log|f|$$ where $f=|f|u$ and $|u|\equiv 1$ on $U_{\alpha}.$ 
 Then it follows from Lemma \ref{help} that
 \begin{equation}\label{bound} 
 \int_{X\times \C^{d_n}}|\log|\la a,u\ra|dd^c\Phi|d\probn(a)
  \leq C_mn^{1-\epsilon}\|dd^c\Phi\|_{\infty} 
 \end{equation}
for large $n.$ Hence, by Fubini's Theorem we obtain
 \begin{eqnarray*}
\int_{\mathcal{S}_n}\la \Z_s,\Phi\ra d\mu_n(s) &=& \int_{\Bbb{C}^{d_n}}\int_X \frac1n dd^c\log|\la a,f\ra|\wedge \Phi\ d\probn(a)\\
&=& \int_{X} \int_{\Bbb{C}^{d_n}}\frac1n \log|\la a,u(x)\ra| d\probn(a)\ dd^c\Phi(x)+ \int_X\frac{1}{2n}\log\sum_{j=1}^{d_n}|f_j|^2  dd^c\Phi\\
&=& I_1(n)+I_2(n)
 \end{eqnarray*}
 Note that by (\ref{bound}) we have $I_1(n)\to 0$ as $n\to \infty.$ On the other hand, by Proposition \ref{bergman} we obtain $I_2(n) \to \la T_{K,q}, \Phi\ra$ as $n\to \infty.$ This completes the proof.
\end{proof}
 We remark that if $a_j^{(n)}$ are standard complex Gaussians then the integral $I_1(n)$ in the proof of Theorem \ref{expected} is equal to zero (see \cite[Lemma 3.1]{SZ}). In particular, the expected zero current is given by 
 \begin{equation}\label{exg}
 \Bbb{E}[\Z_{s_n}]=(\Phi_n^S)^*\omega_{FS}
 \end{equation}
  where
  $$\Phi_n^S:X\to \Bbb{P}^{d_n-1}$$
  $$x\to [S_1^n(x):\dots:S_{d_n}^n(x)]$$
  is the Kodaira map defined by the orthonormal basis $S=\{S_1^n,\dots,S_{d_n}^n\}$ and $\omega_{FS}$ is the Fubini-Study form on the complex projective space $\Bbb{P}^{d_n-1}$. In our setting this is no longer the case. In fact, it follows from Theorem \ref{expected} that $$\Bbb{E}[\Z_{s_n}]=(\Phi_n^S)^*\omega_{FS}+O(n^{-\epsilon})$$ for some small $\epsilon>0$ where by $O(n^{-\epsilon})$ we mean a real closed $(1,1)$ current $T_n$ such that $$|\la T_n,\Phi\ra|\leq Cn^{-\epsilon}\|dd^c\Phi\|_{\infty}$$ where $C>0$ is independent of $n$ and the smooth form $\Phi.$ If $a^{(n)}_j$ are standard complex Gaussian then it is classical that (see \cite{SZ1,SZ2,BloomS}) the identity (\ref{exg}), Proposition \ref{bergman} and Theorem \ref{expected} implies that for $1\leq k\leq m$
  $$\Bbb{E}[\Z_{s_n^1,\dots, s^k_n}]=((\Phi_n^S)^*\omega_{FS})^k\to T_{K,q}^k$$ in the sense of currents as $n\to \infty.$ 
 We prove the analogue result in our setting. We utilize some arguments from \cite{SZ2} to prove the following:
\begin{cor}\label{PL}
Let $X$ be a projective manifold, $L\to X$ be a positive holomorphic line bundle and $K\subset X$ be a locally regular compact set together with a continuous function $q:K\to \Bbb{R}.$ Then
$$\Bbb{E}[\Z_{s_n^1,\dots,s_n^k}]=\Bbb{E}[\Z_{s_n^1}]\wedge \dots\wedge \Bbb{E}[\Z_{s_n^k}].$$
Moreover, $$\Bbb{E}[\Z_{s_n^1,\dots,s_n^k}] \to T_{K,q}^k$$
in the sense of currents as $n\to \infty.$
\end{cor}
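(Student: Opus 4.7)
My plan is to prove the factorization identity by induction on $k$ and then obtain the convergence from Bedford-Taylor continuity of the wedge product combined with the Bergman kernel asymptotics of Proposition \ref{bergman}.

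For the factorization, I would first observe that the absolute continuity of $\prob$ guarantees that for $\mu_n^k$-almost every tuple $(s^1,\dots,s^k)$ the zero divisors $Z_{s^i}$ meet transversally, so that by Bedford-Taylor theory
\[
\Z_{s^1,\dots,s^k}=\Z_{s^1}\wedge\cdots\wedge \Z_{s^k}.
\]
The case $k=1$ is tautological, and in the inductive step, using Fubini together with the i.i.d.\ assumption, I reduce to the commutation
\[
\Bbb{E}_{s^1}[\Z_{s^1}\wedge T]=\Bbb{E}[\Z_{s^1}]\wedge T
\]
for a positive closed $(k-1,k-1)$ current $T$ independent of $s^1$. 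Combined with the inductive hypothesis $\Bbb{E}_{s^2,\dots,s^k}[\Z_{s^2}\wedge\cdots\wedge \Z_{s^k}]=\Bbb{E}[\Z_{s^2}]\wedge\cdots\wedge\Bbb{E}[\Z_{s^k}]$ (applied after conditioning on $s^1$), this yields the desired factorization.

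The main obstacle, and the step where hypothesis (\ref{h2}) plays a decisive role, is justifying this commutation. I would proceed by testing against a smooth form $\Phi$ of bidegree $(m-k,m-k)$, writing $\Z_s=\omega+dd^c u_s$ with $u_s=\frac{1}{n}\log\|s\|_{h_n}$ locally in a trivialization, and integrating by parts (using that $T$ is closed) to rewrite the pairing as
\[
\la \Z_s\wedge T,\Phi\ra=\la T,\omega\wedge\Phi\ra+\la u_s, T\wedge dd^c\Phi\ra.
\]
Since $T\wedge dd^c\Phi$ is a signed measure of finite total variation on $X$, the integrability bound $\int|\log|\la a,u\ra||\,d\probn(a)=O(n^{1-\epsilon})$ supplied by Lemma \ref{help}, applied to the decomposition $u_s=\frac{1}{n}\log|\la a,u\ra|+\frac{1}{n}\log|f|$ coming from a local holomorphic frame, provides the uniform integrability needed to invoke Fubini and interchange $\Bbb{E}_{s^1}$ with the pairing against $T\wedge dd^c\Phi$.

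For the convergence assertion, the factorization reduces the problem to showing $\Bbb{E}[\Z_{s_n^1}]\wedge\cdots\wedge\Bbb{E}[\Z_{s_n^k}]\to T_{K,q}^k$. From the remarks following Theorem \ref{expected}, each $\Bbb{E}[\Z_{s_n}]$ is the smooth positive closed form $(\Phi_n^S)^*\omega_{FS}$ plus an $O(n^{-\epsilon})$ correction in the relevant dual norm, and its quasi-potential is $\tfrac{1}{2n}\log\sum_j\|S_j^n\|^2_{h_n}$ up to uniformly small errors. By Proposition \ref{bergman} these potentials converge uniformly on $X$ to the continuous function $V_{K,q}$, and since $T_{K,q}=\omega+dd^c V_{K,q}$ has continuous potential, Bedford-Taylor's continuity theorem for wedge products of positive closed $(1,1)$ currents under uniform convergence of bounded quasi-psh potentials yields $((\Phi_n^S)^*\omega_{FS})^k\to T_{K,q}^k$. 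A routine estimate handles the $O(n^{-\epsilon})$ correction terms, which contribute currents vanishing weakly in the limit.
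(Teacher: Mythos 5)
Your route is genuinely different from the paper's. The paper proves both assertions by induction on $k$ via restriction to the hypersurface $X'=Z_{s_1}$: it fixes $s_1$, pushes the measure forward under the restriction map $\chi:\mathcal{S}_n\to\mathcal{S}_n|_{X'}$, applies the inductive hypothesis on the submanifold $X'$, and then averages over $s_1$; the convergence statement is obtained the same way, by comparing $\Bbb{E}[\Z_{S^k_n}]$ with $\alpha_n^k$, $\alpha_n=\omega+\frac{1}{2n}dd^c\log\|S_n(x,x)\|_{h_n}$, up to an $O(n^{-\epsilon})$ error, and then invoking Proposition \ref{bergman} and Bedford--Taylor. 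You instead stay on $X$ throughout, replacing the slice-and-average step by a potential-level integration by parts, $\la \Z_s\wedge T,\Phi\ra=\la T,\omega\wedge\Phi\ra+\la u_s,T\wedge dd^c\Phi\ra$, with Fubini justified by Lemma \ref{help}; and for the convergence you use the factorization plus uniform convergence of the quasi-potential of $\Bbb{E}[\Z_{s_n}]$ to $V_{K,q}$ (again Lemma \ref{help} for the $O(n^{-\epsilon})$ correction, Proposition \ref{bergman} for the Bergman part) and Bedford--Taylor continuity. This avoids having to re-run the expected-distribution estimates on the restricted section spaces $\mathcal{S}_n|_{X'}$, which is a genuine simplification; what you lose is the paper's explicit quantitative comparison $\la\Bbb{E}[\Z_{S^k_n}],\Phi\ra=\la\alpha_n^k,\Phi\ra+O(n^{-\epsilon})\|dd^c\Phi\|_\infty$, which is reused later in the proof of Lemma \ref{var2}.

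There is, however, one concrete loose end in your inductive bookkeeping. Your stated commutation handles the interchange in the $s^1$-variable with $T=\Z_{s^2}\wedge\cdots\wedge\Z_{s^k}$ fixed, giving $\Bbb{E}_{s^2,\dots,s^k}\bigl[\Bbb{E}[\Z_{s^1}]\wedge\Z_{s^2}\wedge\cdots\wedge\Z_{s^k}\bigr]$. But the remaining interchange, namely pulling the deterministic current $\Bbb{E}[\Z_{s^1}]$ (or, in the other ordering, the fixed singular current $\Z_{s^1}$) through the expectation over $s^2,\dots,s^k$, is neither your stated commutation nor the plain inductive hypothesis ``applied after conditioning'': the inductive hypothesis factorizes $\Bbb{E}[\Z_{s^2}\wedge\cdots\wedge\Z_{s^k}]$ alone, with no extra wedge factor present. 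You need either a second commutation lemma allowing a fixed positive closed $(1,1)$ current with bounded quasi-psh potential to pass through the expectation (which works by the same integration-by-parts/Fubini argument, since by Lemma \ref{help} the potential of $\Bbb{E}[\Z_{s_n}]$ differs from the smooth Bergman potential by $O(n^{-\epsilon})$ uniformly, hence is bounded, but pairing the expected current against the non-smooth form involving this potential requires an approximation or order-zero argument), or you should iterate your commutation one variable at a time, checking at each stage that the mixed wedge products $\Bbb{E}[\Z_{s^1}]\wedge\cdots\wedge\Bbb{E}[\Z_{s^{j}}]\wedge\Z_{s^{j+1}}\wedge\cdots$ are well defined by Bedford--Taylor and agree almost everywhere with the geometric intersections. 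This is fixable with the tools you already use, but as written the induction does not close.
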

\begin{proof}
Note that for systems $(s_n^1,\dots,s_n^k)$ in general position, by Bertini's theorem their zero sets are smooth and intersect transversally and $[\Z_{s^1_n,\dots,s_n^k}]=[\Z_{s_n^1}]\wedge\dots\wedge [\Z_{s_n^k}]$ is a positive closed $(k,k)$ current of mass one and hence $\Bbb{E}[\Z_{s_n^1,\dots,s_n^k}]$ is well-defined. \\ \indent
We prove the assertion by induction on $k.$ Note that the case $k=1$ was proved in Theorem \ref{expected}. Assume that the the claim holds for $k-1.$ We fix $s_1$ such that $X':=Z_{s_1}$ is a smooth hypersurface in $X.$ We also denote the restriction $s':=s|_{X'}$ for generic $s\in \mathcal{S}_n$ and define the restriction map $\chi:\mathcal{S}_n\to\mathcal{S}_n'$ where $\mathcal{S}_n'=\mathcal{S}_n|_{X'}.$ We endow $\mathcal{S}_n'$ with the probability measure $\mu_n':=\chi_*\mu_n.$ Then by induction hypothesis applied on $X'=Z_{s_1}$
 \begin{eqnarray*} 
\int_{\mathcal{S}_n^{k-1}}\la \Z_{s_1,s_2,\dots,s_k},\Phi\ra d\mu_n(s_2)\dots d\mu_n(s_k) &=& \int_{(\mathcal{S}_n')^{k-1}}\la\Z_{s_2',\dots,s'_k},\Phi|_{X'}\ra d\mu_n'(s_2')\dots d\mu_n'(s_k')\\
&=& \la \Bbb{E}[\Z_{s_2'}]\wedge\dots\wedge \Bbb{E}[\Z_{s_k'}],\Phi|_{X'}\ra\\
&=& \int_{Z_{s_1}}\Bbb{E}[\Z_{s_2}]\wedge\dots\wedge \Bbb{E}[\Z_{s_k}]\wedge\Phi
\end{eqnarray*}
then taking the average over $s_1$ we obtain the first assertion. \\ \indent To prove the second assertion, we let
 \begin{equation}\label{an}
 \alpha_n:=\omega+\frac{1}{2n}dd^c\log \|S_n(x,x)\|_{h_n}
 \end{equation} 
and we claim that $$\la \Bbb{E}[\Z_{s_1,\dots,s_k}],\Phi\ra=\la \alpha_n^k,\Phi\ra+ C_{\Phi,n}$$ where $C_{\Phi,n}$ is the ``error term" which satisfies the uniform estimate $$|C_{\Phi,n}|\leq Cn^{-\epsilon} \|dd^c\Phi\|_{\infty} $$
where $\epsilon>0$ small as in Lemma \ref{help} and $C>0$ is independent of smooth form $\Phi$ and sufficiently large $n$. Note that the case $k=1$ was proved in Theorem \ref{expected}. Now, using the above notation and by applying induction hypothesis on $X'=Z_{s_1}$
\begin{eqnarray*}
 \int_{Z_{s_1}}\la\Z_{s_2,\dots,s_k},\Phi\ra d\mu_n(s_2)\dots d\mu_n(s_k) &=& \la \Bbb{E}[\Z_{s_2',\dots,s_k'}],\Phi|_{X'}\ra  \\
 &=& \int_{Z_{s_1}} \alpha_n^{k-1} \wedge \Phi + C_{X',\Phi,n}
\end{eqnarray*}
where $$|C_{X',\Phi,n}|\leq Cn^{-\epsilon}\|dd^c_{x'} (\Phi|_{X'})\|_{\infty}\leq Cn^{-\epsilon}\|dd^c\Phi\|_{\infty}\int_{X'}\omega^{m-1}=Cn^{-\epsilon}\|dd^c\Phi\|_{\infty}$$
where the later equality comes from computing the integral in cohomology. Now, taking the average over $s_1$ and using the estimate in proof of Theorem \ref{expected} we obtain
\begin{eqnarray*}
\la \Bbb{E}[\Z_{s_1,\dots,s_k}],\Phi\ra &=& \la \alpha_n,\alpha_n^{k-1}\wedge\Phi\ra+ C_{\Phi,n}' +\int_{\mathcal{S}_n}C_{X',\Phi,n} d\mu_n(s_1)\\
&=& \la \alpha_n^k,\Phi\ra+ C_{\Phi,n}
\end{eqnarray*}
where 
$$|C_{\Phi,n}|\leq |C_{\Phi,n}'|+\int_{\mathcal{S}_n}|C_{X',\Phi,n}|d\mu_n(s_1)\leq Cn^{-\epsilon}\|\alpha_n^{k-1}\wedge dd^c\Phi\|_{\infty}+Cn^{-\epsilon}\|dd^c{\Phi}\|_{\infty}$$
Thus, the assertion follows from the above estimate and the uniform convergence of Bergman kernels to weighted global extremal function (Proposition \ref{bergman}) together with a theorem of Bedford and Taylor \cite[\S 7]{BT2} on convergence of Monge-Amp\`ere measures.
\end{proof}
\section{Almost everywhere convergence to the expected limit distribution}
In this section we prove the second assertion in Theorem \ref{main} for the case $k=1$. 
\begin{thm} \label{ae1}
Let $X$ be a projective homogeneous manifold, $L\to X$ be a positive holomorphic line bundle and $K\subset X$ be a locally regular compact set with a continuous weight $q.$ Then for $\mu$-a.e. $\{s_n\}_{n\geq 1}\in\mathcal{S}_{\infty}$
$$\widetilde{Z}_{s_n} \to T_{K,q}$$
in the sense of currents as $n\to \infty.$
\end{thm}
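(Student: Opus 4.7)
The plan is to use the global potential decomposition $\widetilde{Z}_{s_n} = \omega + dd^c\psi_n$, where
$$\psi_n(x) := \tfrac{1}{n}\log\|s_n(x)\|_{h_n}$$
is globally $\omega$-psh on $X$, and to prove that $\psi_n \to V_{K,q}$ in $L^1(X, dV)$ almost surely. Convergence of currents then follows by taking $dd^c$.

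First I would record the one-sided bound. Proposition \ref{BL} already provides
$$\limsup_{n\to\infty}\psi_n(x) \leq V_{K,q}(x)\quad \text{pointwise on } X,$$
almost surely. Since the $\psi_n$ are $\omega$-psh and uniformly bounded above by $\max_X V_{K,q}+o(1)$, on this event of full measure the family $\{\psi_n\}$ is pre-compact in $L^1(X,dV)$ and every $L^1$-limit point $\psi$ is $\omega$-psh. The standard Hartogs comparison gives $\psi \leq (\limsup \psi_n)^* \leq V_{K,q}$ on $X$.

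The heart of the argument is to rule out limit points strictly smaller than $V_{K,q}$. For this I would fix a countable family $\{\rho_\ell\}$ of smooth test functions that is dense in $C^0(X)$ and prove, for each $\ell$,
$$\int_X \psi_n\, \rho_\ell\, dV \longrightarrow \int_X V_{K,q}\, \rho_\ell\, dV \qquad \text{almost surely}.$$
Convergence of expectations follows from Theorem \ref{expected} (equivalently, from uniform Bergman kernel asymptotics, Proposition \ref{bergman}, combined with the estimate of Lemma \ref{help} that controls $\mathbb{E}\bigl[\tfrac{1}{n}\log|\langle a^{(n)},u(x)\rangle|\bigr]$ uniformly in the unit vector $u(x)$). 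For the deviation part, I would establish a variance bound of the form
$$\mathrm{Var}\!\left(\int_X \psi_n\, \rho_\ell\, dV\right) = O(n^{-1-\eta})$$
for some $\eta>0$, by writing the variance as a double integral of covariances of $\tfrac{1}{n}\log|\langle a^{(n)},u(x)\rangle|$ and controlling each covariance via Cauchy--Schwarz and uniform $L^2$-moment bounds on $\log|\langle a^{(n)},u\rangle|$. These $L^2$ moments are obtained by refining the change-of-variables argument used in Lemma \ref{help}: the tail assumption $\rho > m+1$ in (\ref{h2}) together with the exponential integrability of qpsh functions (a global uniform Skoda estimate) makes the resulting bound summable, and Chebyshev plus Borel--Cantelli delivers almost sure convergence along the full sequence. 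Taking a union over the countable family $\{\rho_\ell\}$, the null event is still null.

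Combining the two inputs, on an event of full $\mu$-measure every $L^1$-limit point $\psi$ of $\{\psi_n\}$ satisfies $\psi \leq V_{K,q}$ pointwise and $\int_X(V_{K,q}-\psi)\rho_\ell\, dV = 0$ for each $\ell$. Non-negativity and density of $\{\rho_\ell\}$ in $C^0(X)$ force $\psi = V_{K,q}$ almost everywhere, and hence as qpsh functions. Therefore $\psi_n \to V_{K,q}$ in $L^1(X,dV)$, and $\widetilde{Z}_{s_n} = \omega + dd^c\psi_n \to \omega + dd^c V_{K,q} = T_{K,q}$ in the sense of currents, almost surely. I expect the main obstacle to be the variance estimate: the random variable $\log|\langle a^{(n)}, u\rangle|$ has singularities on both tails (when $|\langle a,u\rangle|$ is large \emph{or} small), so the $L^2$-moment control must be uniform in the unit vector $u$ and sharp enough that the tail exponent $\rho > m+1$ just suffices for Borel--Cantelli.
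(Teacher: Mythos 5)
Your overall scheme (work with the quasi-potentials $\psi_n=\frac1n\log\|s_n\|_{h_n}$, get the upper envelope bound from Proposition \ref{BL}, and identify the limit by proving almost sure convergence of $\int_X\psi_n\rho_\ell\,dV$ for a countable dense family of test functions) is reasonable, and a quasi-potential route is indeed possible in principle. The gap is the step that carries all the weight: the claimed variance decay $\mathrm{Var}\bigl(\int_X\psi_n\rho\,dV\bigr)=O(n^{-1-\eta})$ does not follow from the ingredients you cite, and under the paper's hypothesis $\rho>\dim X+1$ it is not true in general. Cauchy--Schwarz on the covariances together with a uniform-in-$u$ second moment bound only gives $\int_{\C^{d_n}}(\log|\langle a,u\rangle|)^2\,d{\bf P}_n(a)\lesssim n^{2-2\epsilon}$ with $(\rho-1)(1-\epsilon)\geq m$ (this is exactly the computation in the paper's proof of Lemma \ref{var2} for $k=1$), hence only $\mathrm{Var}\lesssim n^{-2\epsilon}$, which is summable only when $\epsilon>1/2$, i.e.\ roughly when $\rho>2m+1$. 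More structurally: unlike the pairing $\langle\widetilde{Z}_{s_n},\Phi\rangle=\langle\omega,\Phi\rangle+\langle\frac1n\log\|s_n\|_{h_n},dd^c\Phi\rangle$ used for the expected distribution, integration of $\psi_n$ against the positive measure $\rho\,dV$ does not annihilate additive constants, so your functional contains the common term $\frac1n\log\|a^{(n)}\|\cdot\int_X\rho\,dV$; the tail condition (\ref{h2}) only gives ${\bf P}\{\log\|a^{(n)}\|>R\}\lesssim d_nR^{-\rho}$, so this term has standard deviation of order $n^{m/\rho-1}$ and its variance, of order $n^{2m/\rho-2}$, fails to be summable whenever $m+1<\rho\leq 2m$. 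Thus Chebyshev plus Borel--Cantelli along the full sequence breaks down precisely in the regime the theorem is meant to cover.

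This is also why the paper takes a different route: it never proves a decaying variance. Lemma \ref{var1} gives only a uniform bound, obtained from the global Skoda-type exponential estimate applied to the Dinh--Sibony quasi-potentials (which is where homogeneity of $X$ enters), and Kolmogorov's strong law then yields only Ces\`aro convergence $\frac1n\sum_{k\leq n}\mathscr{U}_{\widetilde{Z}_{s_k}}(\nu)\to\mathscr{U}_{T_{K,q}}(\nu)$. The upgrade to convergence of the whole sequence is done by Walters' lemma (a density-one subsequence), the extremal inequality $\limsup_n\mathscr{U}_{\widetilde{Z}_{s_n}}\leq\mathscr{U}_{T_{K,q}}$ of Lemma \ref{ext} (the super-potential analogue of your use of Proposition \ref{BL}), negativity of the potentials, continuity of $\mathscr{U}_{T_{K,q}}$, and finally a countable dense family of smooth measures. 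To salvage your Borel--Cantelli argument you would have to either strengthen the tail hypothesis to roughly $\rho>2m$, or first split $\log|\langle a^{(n)},u(x)\rangle|=\log\|a^{(n)}\|+\log\bigl|\langle a^{(n)}/\|a^{(n)}\|,u(x)\rangle\bigr|$, dispose of the first term almost surely via Lemma \ref{limsup} rather than through a variance, and then prove a new small-ball estimate, uniform in unit vectors $u$, for the normalized quantity $|\langle a,u\rangle|/\|a\|$ giving an $O((\log n)^2)$ second moment for its logarithm; none of this is in your proposal. Alternatively, accept an $O(1)$ variance and add the Ces\`aro/density-one/upper-envelope mechanism, which is essentially the paper's proof.
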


\begin{proof}
By \cite[Lemma 3.2.5]{DS11} and Proposition \ref{determine}, it is enough to show that for $\mu$-a.e. $\{s_n\}\in\mathcal{S}_{\infty}$ the sequence of super potentials 
$\{\U_{\Z_{s_n}}\}$ converges to the super-potential of $T_{K,q}$ on smooth measures in $\mathscr{C}_m$. To this end, for a fixed smooth measure $\nu\in\mathscr{C}_m$ we define the sequence of random variables 
$$X_n:\mathcal{S}_{\infty}\to (-\infty,0]$$ 
$$X_n(\{s_j\}_{j\geq 1})=\U_{\Z_{s_n}}(\nu)$$
where $\U_{\Z_{s_n}}$ denotes the super-potential of $\Z_{s_n}$ defined by (\ref{super}). Thus, $\{X_n\}$ is a sequence of negative independent random variables (but they are not identically distributed). Note that since $\nu$ is smooth, $V_{K,q}$ is $\nu$-integrable and by Theorem \ref{expected}
$$\Bbb{E}[X_n]=\int_{\mathcal{S}_n} \la \Z_{s_n},U_{\nu}\ra d\mu_n(s_n)\to \la T_{K,q},U_{\nu}\ra=\U_{T_{K,q}}(\nu)$$ as $n\to \infty$ where $U_{\nu}$ is the quassi-potential of $\nu$ defined by (\ref{number}). This in turn implies that
\begin{equation}\label{mean}
\lim_{n\to\infty}\Bbb{E}[\frac{1}{n}\sum_{k=1}^nX_k]=\U_{T_{K,q}}(\nu).
\end{equation}
On the other hand, the variance of $X_n$ is given by
$$Var[X_n]=\Bbb{E}[X_n^2]-(\Bbb{E}[X_n])^2.$$
Note that the second term in the variance of $X_n$ is bounded by a constant independent of $n$. We will show that the first term is also bounded by a constant independent of $n$: 
\begin{lem}\label{var1}
Let $\nu$ and $X_n$ be as above. Then 
$$Var[X_n]\leq C_{L,\nu}$$ where $C_{L,\nu}>0$ depends only on $\nu$ and $L\to X.$
\end{lem}
\begin{proof}
It is enough to show that $\Bbb{E}[X_n^2]\leq C_{L,\nu}$. Indeed, since $\nu$ is smooth, 
$\U_{\nu}$ is Lipschitz on $\mathscr{C}_1$ with respect to $dist_{\alpha}$. Then by \cite[Proposition A.3]{DS3} and \cite[Theorem 3.9]{DN12} (see also \cite[Theorem 1.1]{DNS}) there exists constants $\alpha>0,C>0$ (depending only on $L\to X$ and $\nu$) such that for every $s_n\in H^0(X,L^{\otimes n})$ and $n\geq 1$ 
$$\int_X\exp(-\alpha\varphi_{s_n})d\nu\leq C$$ where $\varphi_{s_n}$ is the quasi potential of $\Z_{s_n}$ defined by (\ref{choice}). Now, by using $e^x\geq \frac{x^2}{2!}$ for $x\geq 0$ we conclude that  
$$\|\varphi_{s_n}\|_{L^2(\nu)}\leq C_{L,\nu}$$ for some constant $C_{L,\nu}>0$ which depends only on $L\to X$ and $\nu$ but independent of $s_n.$ Thus, by Jensen's inequality we obtain
\begin{eqnarray*}
\Bbb{E}[X_n^2] &=& \int_{\mathcal{S}_n} X_n^2d\mu_n\\
&=& \int_{\mathcal{S}_n} (\U_{\Z_{s_n}}(\nu))^2d\mu_n\\
&=& \int_{\mathcal{S}_n}(\int_X\varphi_{s_n}d\nu)^2d\mu_n\\
&\leq& \int_{\mathcal{S}_n}(\int_X\varphi_{s_n}^2d\nu) d\mu_n\\
&\leq & C_{\omega,\nu}
\end{eqnarray*}
\end{proof}
Hence, by Lemma \ref{var1}, Kolmogorov's strong law of large numbers \cite{Bil} and (\ref{mean}) we obtain that for $\mu$-a.e. $\{s_n\}\in \mathcal{S}_{\infty}$
\begin{equation}\label{avmean}
\frac1n\sum_{k=1}^n\U_{\Z_{s_k}}(\nu)\to\U_{T_{K,q}}(\nu)
\end{equation}
as $n\to \infty.$ 
Note that since $\nu$ is a probability measure, $L^2(\nu)$ norm dominates $L^1(\nu)$ norm. In particular $X_n$'s are bounded. Next, we use the following lemma:
\begin{lem}\cite[Theorem 1.20]{Walters}\label{walters}
Let $\{b_j\}$ be a bounded sequence of negative real numbers. TFAE:
\begin{itemize}
\item[(1)] There exists a subsequence $\{b_{j_k}\}$ of relative density one (i.e. $\frac{k}{j_k}\to 1$ as $k\to \infty$) such that $b_{j_k}\to 0.$
\item[(2)] $\displaystyle\lim_{n\to \infty}\frac1n\sum_{j=1}^nb_j=0.$
\end{itemize}
\end{lem}
Thus, we conclude that $\mu$-a.e. $\{s_n\}\in \mathcal{S}_{\infty}$ has a subsequence $\{s_{n_j}\}$ of density one such that $$\U_{\Z_{s_{n_j}}}(\nu)\to \U_{T_{K,q}}(\nu).$$ We will show that in the above case, in fact, the whole sequence $\{\U_{\Z_{s_n}}(\nu)\}_n$ converges to $\U_{T_{K,q}}(\nu).$ 
Indeed, since $\U_{\Z_{s_{n}}}\leq0$
 by a variation of  Hartogs Lemma \cite[Prop 3.2.6]{DS11} either $\U_{\Z_{s_{n}}}$ converges uniformly to $-\infty$ or there exists a subsequence $\Z_{s_{n_k}}$ such that $\Z_{s_{n_k}}\to T$ weakly for some $T\in \mathscr{C}_1$ and $\U_{\Z_{s_{n_k}}}\to \U_T$ on smooth measures. However, by Remark \ref{L1} we know that the means $\U_{\Z_{s_{n}}}(\omega^m)$ are uniformly bounded. Hence, the later occurs. Next, we prove the following lemma:
\begin{lem}\label{ext}
For $\mu$-a.e. $\{s_n\}\in \mathcal{S}_{\infty}$
$$\limsup_{n\to \infty}\U_{\Z_{s_n}}(\sigma) \leq \U_{T_{K,q}}(\sigma)$$ for every smooth $\sigma\in\mathscr{C}_m$ where $\U_{T_{K,q}}$ denotes the super-potential of $T_{K,q}$ of mean $\limsup_{n\to \infty}\U_{\Z_{s_n}}(\omega^k).$
In particular, for $\mu$-a.e. $\{s_n\}\in\mathcal{S}_{\infty},$ if $\Z_{s_{n_k}}\to T$ in the sense of currents then $$\U_T\leq \U_{T_{K,q}}$$ on smooth probability measures in $\mathscr{C}_m.$ 
\end{lem}

\begin{proof}
For smooth $\sigma\in \mathscr{C}_m$ by (\ref{number}) we have 
\begin{eqnarray*}
\U_{\Z_{s_{n}}}(\sigma) &=& \la \Z_{s_{n}},U_{\sigma}\ra\\
&=&\la\omega+\frac1ndd^c \log\|s_{n}\|_{h_{n}},U_{\sigma}\ra \\
&=& \la \omega,U_{\sigma}\ra+\la\frac1n\log\|s_n\|_{h_{n}},dd^cU_{\sigma}\ra
\end{eqnarray*}
where $U_{\sigma}$ is smooth and $dd^cU_{\sigma}=\sigma-\omega^m.$ 
Now, using
$$\frac1n \log\|s_n(x)\|_{h_n}\leq \frac{1}{2n}\log\sum_{j=1}^{d_n}|a^{(n)}_j|^2 + \frac{1}{2n}\log\sum_{j=1}^{d_n}\|S_j^n(x)\|_{h_n}^2$$
by Lemma \ref{limsup} and uniform convergence of Bergman kernels in Proposition \ref{bergman} we obtain
\begin{eqnarray*}
\limsup_{n\to \infty}\U_{\Z_{s_n}}(\sigma) &\leq&  \la \omega,U_{\sigma}\ra +\la V_{K,q},dd^cU_{\sigma}\ra\\
&=& \la T_{K,q},U_{\sigma}\ra\\
&=& \U_{T_{K,q}}(\sigma)
\end{eqnarray*}
\end{proof}
Now, by Proposition \ref{cont} the super-potential $\U_{T_{K,q}}$ is continuous on $\mathscr{C}_m.$ If, $\U_T(\nu)\not=\U_{T_{K,q}}(\nu) $ then by \cite[Proposition 3.2.2]{DS11} $$\U_{\Z_{s_{n_k}}}(\nu)<\U_{T_{K,q}}(\nu)$$ for large $k.$ Since $\U_{\Z_{s_{n_k}}}$ are negative this contradicts (\ref{avmean}). Thus, $\U_T$ and $\U_{T_{K,q}}$ agrees on smooth measures. Hence, $T=T_{K,q}$ by Proposition \ref{determine}.\\ \indent
 So far, we have proved that for every smooth measure $\nu\in\mathscr{C}_m$ there exists a set $\mathcal{S}_{\nu}\subset\mathcal{S}_{\infty}$ of probability one such that for every $\{s_n\}_{n\in \Bbb{N}}\in\mathcal{S}_{\nu}$
 $$\U_{\Z_{s_n}}(\nu)\to \U_{T_{K,q}}(\nu)$$ as $n\to \infty.$ Now, we fix a countable dense subset of smooth measures $\{\nu_j\}_{j\in \Bbb{N}} \subset \mathscr{C}_m$ with respect to the $dist_{\alpha}$ for some fixed $\alpha>0$ and define $$\mathscr{S}:=\cap_{j=1}^{\infty}\mathcal{S}_{\nu_j}.$$ Note that $\mathscr{S}$ has probability one. We claim that for every smooth $\nu\in \mathscr{C}_m$ $$\U_{T_{K,q}}(\nu)=\lim_{n\to \infty} \U_{\Z_{s_n}}(\nu)$$ for every $\{s_n\}_{n\geq1}\in\mathscr{S}.$ Indeed, let $\nu' \to \nu$ in $(\mathscr{C}_m,dist_{\alpha})$ with $\nu'\in \{\nu_j\}_{j\in\Bbb{N}}$ then
 $$|\U_{\Z_{s_n}}(\nu)-\U_{T_{K,q}}(\nu)|\leq |\U_{\Z_{s_n}}(\nu-\nu')|+|\U_{\Z_{s_n}}(\nu')-\U_{T_{K,q}}(\nu')|+|\U_{T_{k,q}}(\nu'-\nu)|$$
 where the second term tends to 0 by above argument and the third terms tends to 0 by Proposition \ref{cont}. Finally, since $\nu$ and $\nu'$ are smooth the first term can be bounded $$|\U_{\Z_{s_n}}(\nu-\nu')|=|\la \varphi_{s_n} ,\nu-\nu'\ra|\leq |\int \varphi_{s_n} f dV| $$ where $f:=f_{\nu',\nu}$ is a smooth function with $\|f\|_{\infty} \to 0$ as $\nu'\to \nu$.   
 Since $|\int\varphi_{s_n}dV|\leq C$ for every $s_n\in \mathcal{S}_n$ and $n\in\Bbb{N}$ this finishes the proof.
\end{proof}
\begin{rem}
Finally, we stress that one can work with quasi-potentials of positive closed $(1,1)$ currents rather than super-potentials to prove Theorem \ref{ae1}. In particular, the assertion of Theorem \ref{ae1} is still valid if $X$ is merely projective manifold but not homogenous. We refer the reader to \cite{BloomL} for such a treatment.  
\end{rem}
\section{Almost everywhere convergence for bidegree $(k,k)$}
 Let $S^k_n=(s_n^1,s_n^2,\dots,s_n^k)$ be a $k$-tuple of i.i.d random holomorphic sections $s_n^j\in \mathcal{S}_n$ for $j=1,2,\dots,k$ where $1\leq k\leq m.$ We are interested in distribution of simultaneous zeros: $$Z_{S^k_n}:=\{x\in X: s_n^1(x)=\dots=s_n^k(x)=0\}$$ We denote set of all such $k$-tuples (respectively sequences of $k$-tuples) by $\mathcal{S}_n^k:=\prod_{j=1}^k\mathcal{S}_n$ (respectively by $\mathcal{S}_{\infty}^k:=\prod_{j=1}^k \mathcal{S}_{\infty}$) endowed with $k$-fold the product measure $\mu_n^k$ (respectively $\mu^k$) induced by $\mu_n$ (respectively $\mu$). By Bertini's theorem \cite[pp 137]{GH} with probability one the zero sets of $Z_{s^j_n}$ are smooth and intersect transversally. In particular, for generic $S^k_n\in \mathcal{S}_n^k$ the zero set $Z_{S^k_n}$ is a complex submanifold of codimension $k.$ Moreover, almost surely the current of integration along the set $Z_{S^k_n}$ is given by 
 $$[Z_{S^k_n}]=[Z_{s_n^1}]\wedge \dots \wedge [Z_{s_n^k}].$$ 
 Next, we may write
 \begin{equation}\label{potential}
 \Z_{S^k_n}:=\frac{1}{n^k}[Z_{S^k_n}]=\omega^k+dd^cU_{S^k_n}
 \end{equation}
 where 
 \begin{equation}\label{superk}
 U_{S^k_n}(z)=\int_{z\not=\zeta}(\Z_{S^k_n}(\zeta)-\omega^k(\zeta))\wedge K(z,\zeta) 
 \end{equation}
  is the negative $(k-1,k-1)$ current given by Theorem \ref{DS}. 
  
 \begin{thm}\label{ae2}
 Let $X$ be a projective homogeneous manifold, $L\to X$ be a positive holomorphic line bundle and $K\subset X$ be a locally regular compact set with a continuous weight $q$. Then for $\mu^k$-a.e. $\{S^k_n\}_{n\geq1}\in \mathcal{S}^k_{\infty}$ 
 $$\Z_{S^k_n}\to T_{K,q}^k$$
 in the sense of currents as $n\to \infty.$
 \end{thm}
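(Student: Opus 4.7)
The plan is to proceed by induction on $k$, with base case $k=1$ supplied by Theorem~\ref{ae1}. By Proposition~\ref{determine} and the concluding density argument of Theorem~\ref{ae1}, it suffices to show that for every smooth $R$ in a fixed countable dense subset of $\Cmk$ (with respect to some $dist_\alpha$), the super-potentials satisfy $\U_{\Z_{S^k_n}}(R) \to \U_{T_{K,q}^k}(R)$ almost surely. The algebraic tool is the wedge formula~(\ref{wedge}): writing $\Z_{S^k_n} = \Z_{s_n^1} \wedge \Z_{S^{k-1}_n}$ with $S^{k-1}_n := (s_n^2, \ldots, s_n^k)$ independent of $s_n^1$, we have
$$\U_{\Z_{S^k_n}}(R) = \la \Z_{S^{k-1}_n}, \om \wedge U_R\ra + \U_{\Z_{s_n^1}}\bigl(\Z_{S^{k-1}_n} \wedge R\bigr) - \U_{\Z_{s_n^1}}\bigl(\Z_{S^{k-1}_n} \wedge \om^{m-k+1}\bigr),$$
and the analogous identity for $\U_{T_{K,q}^k}(R)$ is obtained by replacing $\Z_{S^{k-1}_n}$ and $\Z_{s_n^1}$ with $T_{K,q}^{k-1}$ and $T_{K,q}$ respectively.

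By Fubini combined with the inductive hypothesis (applied to the last $k-1$ coordinates) and Theorem~\ref{ae1} (applied to the first coordinate), for $\mu^k$-a.e.\ sequence of $k$-tuples we have simultaneously $\Z_{S^{k-1}_n} \to T_{K,q}^{k-1}$ and $\Z_{s_n^1} \to T_{K,q}$ weakly. Fix such a good sequence and set $\nu_n := \Z_{S^{k-1}_n} \wedge R$, $\nu := T_{K,q}^{k-1} \wedge R$; then $\nu_n \to \nu$ weakly as positive measures. Since $V_{K,q}$ is continuous, $T_{K,q}$ is PC, and by Proposition~\ref{cont}(2) so is $T_{K,q}^{k-1}$; hence $\U_{T_{K,q}}$ is continuous on $\mathscr{C}_m$, giving $\U_{T_{K,q}}(\nu_n) \to \U_{T_{K,q}}(\nu)$, while the joint upper semicontinuity of Lemma~\ref{Climsup} yields $\limsup_n \U_{\Z_{s_n^1}}(\nu_n) \le \U_{T_{K,q}}(\nu)$. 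The first term of the decomposition converges to $\la T_{K,q}^{k-1}, \om \wedge U_R\ra$ because $\om \wedge U_R$ is a smooth test form.

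The main obstacle, which I anticipate being the hardest part, is to upgrade the upper bound for $\U_{\Z_{s_n^1}}(\nu_n)$ to a genuine limit, i.e.\ to show
$$\U_{\Z_{s_n^1}}(\nu_n) \;=\; \int_X \varphi_{s_n^1}\, d\nu_n \;\longrightarrow\; \int_X V_{K,q}\, d\nu \;=\; \U_{T_{K,q}}(\nu)$$
almost surely in $s_n^1$. The plan is to repeat the variance-based argument of Theorem~\ref{ae1}: on a fixed good sample $\{S^{k-1}_n\}$ the random variables $X_n := \U_{\Z_{s_n^1}}(\nu_n)$ are independent and bounded above by $0$, their expectations converge to $\U_{T_{K,q}}(\nu)$ via Bergman kernel asymptotics (Proposition~\ref{bergman} together with Corollary~\ref{PL}) integrated against the weakly convergent $\nu_n$, and the key technical step --- the analogue of Lemma~\ref{var1} in higher bidegree --- is to establish a uniform Skoda-type exponential estimate $\int_X e^{-\alpha \varphi_{s_n^1}}\, d\nu_n \le C$ with $\alpha, C > 0$ independent of $n$ and of the realization. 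Such an estimate simultaneously supplies the uniform $L^2(\nu_n)$-bound on $\varphi_{s_n^1}$ needed for the variance control and (via Jensen's inequality) the uniform lower bound on $X_n$ needed to apply Lemma~\ref{walters}. The strategy for obtaining the estimate is to combine the Dinh--Nguyen--Sibony theory of moderate measures \cite{DNS, DN12, DS3}, which gives moderateness of the limit $\nu = T_{K,q}^{k-1} \wedge R$ from continuity of $\U_{T_{K,q}^{k-1}}$ and smoothness of $R$, with a propagation argument transferring a uniform moderateness constant to the approximating sequence $\nu_n$ using the uniform $L^r$-bounds on the quasi-potentials $U_{S^{k-1}_n}$ supplied by Theorem~\ref{DS}.

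Once this variance estimate is in hand, Kolmogorov's strong law of large numbers gives Cesaro convergence $\tfrac{1}{N}\sum_{n\le N} X_n \to \U_{T_{K,q}}(\nu)$ a.s., Lemma~\ref{walters} extracts a density-one subsequence converging to this value, and the argument used at the end of the proof of Theorem~\ref{ae1} --- upper bound plus passage to a sub-subsequence inside the density-one subsequence --- upgrades this to pointwise convergence. Running the same argument for $\nu_n := \Z_{S^{k-1}_n} \wedge \om^{m-k+1}$ handles the third term of the decomposition and yields the pointwise-in-$R$ almost-sure convergence $\U_{\Z_{S^k_n}}(R) \to \U_{T_{K,q}^k}(R)$. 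Intersecting the probability-one events over the chosen countable dense family of smooth test currents, invoking continuity of $\U_{T_{K,q}^k}$ (PC by Proposition~\ref{cont}(2)) to extend the convergence to all smooth $R \in \Cmk$, and applying Proposition~\ref{determine} finishes the inductive step and hence the theorem.
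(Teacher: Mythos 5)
Your overall skeleton (induction on $k$, reduction to convergence of super-potentials on a countable dense family of smooth $R\in\Cmk$, the wedge formula (\ref{wedge}) plus Lemma \ref{Climsup} for the upper bound, Lemma \ref{walters} and a SLLN for the matching lower bound) is the same as the paper's, and your upper-bound part is essentially the paper's intermediate lemma. But the step you yourself flag as the hardest one is a genuine gap, and as formulated it cannot be repaired: you condition on the sample $\{S^{k-1}_n\}$ and ask for a uniform Skoda-type estimate $\int_X e^{-\alpha\varphi_{s_n^1}}\,d\nu_n\leq C$ for the random measures $\nu_n=\Z_{S^{k-1}_n}\wedge R$, with $\alpha,C$ independent of $n$, of the section and of the realization, to be obtained by "propagating moderateness" from $\nu=T_{K,q}^{k-1}\wedge R$. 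This fails: each $\nu_n$ is carried by the analytic subvariety $Z_{S^{k-1}_n}$, hence by a pluripolar set, so $\nu_n$ is \emph{not} moderate (a moderate measure puts no mass on pluripolar sets), and no exponential bound uniform in $s_n^1$ can hold — for sections whose zero divisor contains (or passes arbitrarily close to) a component of $Z_{S^{k-1}_n}$ the integral is infinite (or arbitrarily large). Moderateness of the weak limit $\nu$, which does hold because $T_{K,q}$ has continuous potentials, does not transfer to the singular approximants via the $\mathscr{L}^r$-bounds of Theorem \ref{DS}; those control quasi-potentials of the $\Z_{S^{k-1}_n}$, not exponential integrals of an independent random qpsh function against $\Z_{S^{k-1}_n}\wedge R$. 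For the same reason your claimed deterministic lower bound on $X_n=\U_{\Z_{s_n^1}}(\nu_n)$ (needed to apply Lemma \ref{walters}) is unavailable: $\int_X\varphi_{s_n^1}\,d\nu_n$ has no bound uniform in the realization. A further soft spot is the conditional expectation step: the error term in Theorem \ref{expected} is controlled by $\|dd^c\Phi\|_\infty$ after an integration by parts against a \emph{smooth} test form, and this does not directly give $\Bbb{E}[\U_{\Z_{s_n^1}}(\nu_n)]\to\U_{T_{K,q}}(\nu)$ when paired with the singular measures $\nu_n$.

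The paper avoids all of this by never estimating against the singular slice measures on $X$. It applies the SLLN directly to $X_n=\U_{\Z_{S^k_n}}(R)$, for which boundedness is immediate from (\ref{5.3}) and smoothness of $R$, and whose expectations converge by Corollary \ref{PL}; the only missing ingredient is then the variance bound, Lemma \ref{var2}, which is proved by its own induction: write $S^k_n=(S'_n,s^k_n)$, restrict to the submanifold $V=\{S'_n=0\}$ (respectively $W=\{s^k_n=0\}$), where $R|_V$ is again a \emph{smooth} form, and invoke the bidegree-one moment estimates of Lemma \ref{help}/Theorem \ref{expected} — which hold on any projective manifold, homogeneous or not — together with $\Bbb{E}[\Z_{S^k_n}]=\Bbb{E}[\Z_{S'_n}]\wedge\Bbb{E}[\Z_{s^k_n}]$ and a Jensen--Fubini argument. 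In other words, the intersection with the random subvariety is absorbed by restricting the whole $k=1$ argument to that subvariety, rather than by exponential estimates for qpsh functions against measures supported on it; if you want to salvage your conditional scheme, this restriction device is the ingredient you would need to import.
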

\begin{proof}
 We will prove the theorem by induction on $k$. Note that the case $k=1$ was proved in Theorem \ref{ae1}. Let's assume that the assertion holds for $k-1.$\\ \indent
 Now, given $\Z_{S^k_n}\in \mathcal{S}_n^k$ we let $U_{\Z_{S^k_n}}$ be as in (\ref{superk}) then by Theorem \ref{DS} 
\begin{equation}\label{5.3}
|\langle U_{\Z_{S^k_n}},\omega^{m-k+1}\rangle|\leq C
\end{equation}
where $C>0$ is independent of $S^k_n$ and $n\in \Bbb{N}.$ We denote the super-potential of $\Z_{S^k_n}$ by  
$$\U_{\Z_{S^K_n}}(R)=\la U_{\Z_{S^k_n}},R\ra$$ whenever $R$ is a smooth form in $\mathscr{C}_{m-k+1}.$
Note that by \cite[Lemma 3.2.5]{DS11} and Proposition \ref{determine} it is enough to show that with probability one, $\U_{\Z_{S^k_n}}(R)$ converges to $\U_{T^k_{K,q}}(R)$ for every smooth form $R\in \Cmk.$ To this end we fix a smooth form $R\in \Cmk$ and define the random variables 
$$X_n:\mathcal{S}_{\infty}^k\to (-\infty,0]$$ 
$$X_n(\{S^k_j\}_{j\geq 1})=\U_{\Z_{S_n}}(R)$$
Note that $X_n$ are independent (but not identically distributed) random variables. We will need the following lemma, proof of which is deferred until the end of this section.
\begin{lem}\label{var2}
Let $X_n$ be as above then the variance of $X_n$ satisfies $$Var[X_n]\leq C_{L,R}n^{-\epsilon}$$ where $\epsilon>0$ small and $C_{L,R}>0$ depends only on the Chern class of the line bundle $L\to X$ and the smooth form $R.$
\end{lem}
 Then by Kolmogorov's strong law of large numbers we conclude that for $\mu^k$-a.e. $S\in \mathcal{S}_{\infty}^{k}$ 
 $$\frac1n\sum_{j=1}^nX_j(S)- \Bbb{E}[\frac1n\sum_{j=1}^nX_j]\to 0$$
 as $n\to \infty.$ On the other hand, by Corollary \ref{PL} 
 $$\Bbb{E}[\Z_{S^k_n}]\to T_{K,q}^k$$
 in the sense of currents as $n\to \infty.$ Thus, we infer that with probability one 
\begin{equation}\label{mean2}
 \lim_{n\to\infty}\frac{1}{n}\sum_{j=1}^n\U_{\Z_{S_j}}(R)=\U_{T_{K,q}}(R).
 \end{equation}
Note that since $R$ is smooth by (\ref{5.3}) $$|X_n(\{S^k_j\}_{j\geq1})|\leq C_R$$  where $C_R>0$ depends on $R$ but independent of $S^k_n\in \mathcal{S}_n^k$ and $n\in\Bbb{N}.$ Thus, by Lemma \ref{walters} we conclude that $\mu^k$-a.e. $\{S^k_n\}_{n\geq1}$ has a subsequence $\{S^k_{n_j}\}$ of density one such that 
 $$\U_{\Z_{S^k_{n_j}}}(R)\to \U_{T_{K,q}^k}(R)$$ as $j\to \infty.$ Next, we will show that in this case in fact the whole sequence $\{\U_{\Z_{S^k_n}}(R)\}_{n\geq1}$ converges. 
Indeed, since $\U_{\Z_{S^k_n}}\leq 0$ on $\Cmk$ by \cite[Prop 3.2.6]{DS11} either $\{\U_{\Z_{S^k_n}}\}_{n\geq 1}$ converges uniformly to $-\infty$ or $\{S^k_n\}_{n\geq1}$ has a subsequence $S^k_{n_i}$ such that $$\Z_{S^k_{n_i}} \to T$$ for some $T\in \Ck$  in the sense of currents as $n_i\to \infty$ and $$\U_{\Z_{S^k_{n_i}}}\to \U_T$$ on smooth forms in $\Cmk$ where $\U_T$ is the super-potential of $T$ of mean $m:=\lim_{n_i\to \infty}\la U_{\Z_{S^k_{n_i}}},\omega^{m-k+1}\ra.$ However, the former is not possible as the means $\{\la U_{\Z_{S^k_n}},\omega^{m-k+1}\ra\}_{n\geq1}$ are uniformly bounded by (\ref{5.3}). Hence, the later occurs. 
Next, we prove the following lemma:
\begin{lem}
For $\mu^k$-a.e. $\{S_n\}\in\mathcal{S}_{\infty}^k,$
$$\limsup_{n\to \infty}\U_{\Z_{S_n}}(\Phi)\leq \U_{T_{K,q}^k}(\Phi)$$
for every smooth form $\Phi\in\Cmk$ where $\U_{T^k_{K,q}}$ denotes the super-potential of $T^k_{K,q}$ of mean $\limsup_{n\to \infty}\U_{\Z_{S_n}}(\omega^{m-k+1}).$
In particular, for $\mu^k$-a.e. $\{S^k_n\}\in\mathcal{S}_{\infty}^k,$ if $\Z_{S^k_{n_i}}\to T$ for some $T\in \Ck$ in the sense of currents then
$$\U_T\leq \U_{T_{K,q}^k}$$ on smooth forms in $\Cmk.$
\end{lem}
\begin{proof}
 We let $\U$ denote the super-potential of mean zero. We prove the lemma by induction on $k$.  Note that the case $k=1$ was proved in Lemma \ref{ext}. Assume that the assertion holds for $k-1.$ By Bertini's theorem \cite{GH} for generic  $S^k_n=(S'_n,s^k_n)$ we may write
$$\Z_{S^k_n}= \Z_{S'_n}\wedge \Z_{s^k_n}$$ 
where $S'_n\in\mathcal{S}_{n}^{k-1}.$  Then by (\ref{wedge}) almost surely the super potential of $\Z_{S^k_{n}}$ of mean zero is given by 
$$\U_{\Z_{S^k_{n}}}(\Phi)=\la \Z_{S^{'}_{n}},\omega\wedge U_{\Phi}\ra+\U_{\Z_{s_{n}^k}}(\Z_{S^{'}_{n}}\wedge dd^c U_{\Phi})$$
where $U_{\Phi}$ is a smooth quasi-potential of $\Phi\in\mathscr{C}_{m-k+1}$ of mean zero. Moreover, by induction hypothesis for generic sequences $\Z_{S'_n} \to T_{K,q}^{k-1}$ and $Z_{s^k_n}\to T_{K,q}$ in the sense of currents. Then by Lemma \ref{Climsup} we have
$$\limsup_{n\to \infty}\U_{\Z_{s^k_{n}}}(\Z_{S'_{n}}\wedge dd^c U_{\Phi}) \leq \U_{T_{K,q}}(T_{K,q}^{k-1}\wedge dd^cU_{\Phi})$$
On the other hand, by induction hypothesis again $$\limsup_{n\to\infty}\U_{S_n'}(\omega\wedge\Phi)=\limsup_{n\to \infty}\la \Z_{S'_{n}},\omega\wedge U_{\Phi}\ra\leq \U_{T_{K,q}^{k-1}}(\omega\wedge \Phi)=\la T_{K,q}^{k-1},\omega\wedge U_{\Phi}\ra$$ 
Thus, combining these and using (\ref{wedge}) we conclude that
$$\limsup_{n\to \infty}\U_{\Z_{S_n}}(\Phi)\leq \U_{T_{K,q}^k}(\Phi)$$ for every smooth form $\Phi$ in $\Cmk.$
\end{proof}
Now, by Proposition \ref{cont}, the super-potential $\U_{T_{K,q}}$ is continuous on $\Cmk$. If $\U_T(R)\not=\U_{T_{K,q}^k}(R)$ then $$\U_T(R)<U_{T_{K,q}^k}(R)$$ thus, by \cite[Prop 3.2.2]{DS11} $$\U_{\Z_{S^k_{n_j}}}(R)<\U_{T_{K,q}^k}(R)$$ for large $n_j$ but this contradicts (\ref{mean2}) as $\U_{Z_{n_j}}$ are negative. Hence, we conclude that for every smooth $R\in \mathscr{C}_{m-k+1}$ there exists a set $\mathcal{S}_R\subset \mathcal{S}_{\infty}^k$ of probability one such that $$\U_{T_{K,q}^k}(R)=\lim_{n\to \infty}\U_{Z_{S^k_n}}(R)$$ for every $\{S^k_n\}_{n\geq1}\in \mathcal{S}_R.$
Finally, applying the density argument given at the end of the Proof of Theorem \ref{ae1} completes the proof.
\end{proof}
Next, we prove Lemma \ref{var2}. The proof is based on induction on bidegree. For $k=1,$ we provide a different proof than the one given in Lemma \ref{var1} which provides some precision on the bound of the variance of $X_n's$.  We utilize some ideas from \cite{SZ, Shiffman}.
\begin{proof}[Proof of Lemma \ref{var2}]
First, we prove the case $k=1:$ we use the same notation as in Lemma \ref{help} and Theorem \ref{expected}. Note that 
$$Var[X_n]=\Bbb{E}[X_n^2]-(\Bbb{E}[X_n])^2$$ 
where
$$\Bbb{E}[X_n^2]=\int_{\mathcal{S}_n}\la \Z_s,U_{\nu}\ra^2d\mu_n(s)$$ and $\nu$ is a smooth measure in $\mathscr{C}_m$ and $U_{\nu}$ is a smooth quasi-potential as in (\ref{number}).
We claim that 
$$\Bbb{E}[X_n^2]=\la\alpha_n,U_{\nu}\ra^2+O(n^{-\epsilon}\|dd^cU_{\nu}\|_{\infty})$$
where $\alpha_n$ as in (\ref{an}) and $\epsilon>0$ as in Lemma \ref{help}. On the other hand, by Theorem \ref{expected} 
$$\Bbb{E}[X_n]=\la\alpha_n,U_{\nu}\ra+O(n^{-\epsilon}\|dd^cU_{\nu}\|_{\infty})$$ hence, the assertion follows for $k=1.$ Next, we prove the claim. Following \cite{SZ} we write
$$\Bbb{E}[X_n^2]=\frac{1}{n^2}\int_X\int_Xdd^cU_{\nu}(z)dd^cU_{\nu}(w)\int_{\C^{d_n}}\log |\la a,f(z)\ra|\log|\la a,f(w)\ra| d\probn(a)$$ 
The later integrant can be written as 
\begin{eqnarray*}
\log|\la a,f(z)\ra|\log|\la a,f(w)\ra|=\log|f(z)| \log|f(w)| &+& \log|f(z)|\log|\la a,u(w)\ra| \\ &+&  \log|f(w)|\log|\la a,u(z)\ra| 
 +  \log|\la a,u(z)\ra|\log|\la a,u(w)\ra|
\end{eqnarray*}
where $f$ and $u$ as in the proof of Theorem \ref{expected}. Thus, we may write
$$\Bbb{E}[X_n^2]=: I_1(n)+I_2(n)+I_3(n)+I_4(n).$$
It follows from Theorem \ref{expected} that
$$I_1(n)=\la \alpha_n,U_{\nu}\ra^2$$ and for $j=2,3$
$$|I_j(n)|\leq Cn^{-\epsilon}|\la \alpha_n,U_{\nu}\ra|\|dd^cU_{\nu}\|_{\infty}.$$
Finally, we claim that 
\begin{equation}\label{i4}
|I_4(n)|\leq C_m n^{-2\epsilon}\|dd^cU_{\nu}\|_{\infty}^2
\end{equation} where $C_m>0$ is independent of $n.$ Indeed, by Cauchy-Schwarz inequality 
$$|I_4(n)|\leq\frac{1}{n^2}  \|dd^cU_{\nu}\|_{\infty}^2\sup_{z\in X}\int_{\C^{d_n}}(\log|\la a,u(z)\ra|)^2 d\probn(a)$$
thus, it is enough to show that for every unit vector $u\in \C^{d_n}$ 
$$\int_{\{(\log|\la a,u\ra|)^2>mn^{2-2\epsilon}\}}(\log|\la a,u\ra|)^2 d\probn(a)\leq C_mn^{1-\epsilon}.$$
where $C_m$ depends only on $m.$ We let
$$L_j:=\{a\in \C^{d_n}: jn^{2-2\epsilon}< (\log|\la a,u\ra|)^2\leq (j+1)n^{2-2\epsilon}\}.$$
Note that $$L_j\subset R_{\sqrt{j}} \cup D_{\sqrt{j}}$$ where $R_{\sqrt{j}}$ and $D_{\sqrt{j}}$ as in the proof of Lemma \ref{help}. Then by (\ref{rk}) and (\ref{dk}) we have   
$$\probn(L_j)\leq r_{\sqrt{j}}(n)-r_{\sqrt{j+1}}(n) +C_mn^me^{-2\sqrt{j}n^{1-\epsilon}} $$
which implies that for sufficiently large $n$
\begin{eqnarray*}
\int_{\{(\log|\la a,u\ra|)^2>mn^{2-2\epsilon}\}}(\log|\la a,u\ra|)^2 d\probn(a) &\leq & \sum^{\infty}_{j=m}(j+1)n^{2-2\epsilon}[(r_{\sqrt{j}}(n)-r_{\sqrt{j+1}}(n)+C_mn^me^{-2\sqrt{j}n^{1-\epsilon}}]\\
&\leq& n^{2-2\epsilon}[(m+1)r_{\sqrt{m}}(n)+\sum_{j=m+1}^{\infty}r_{\sqrt{j}}(n)+C_mn^m\sum_{j=m}^{\infty}(j+1) e^{-2\sqrt{j}n^{1-\epsilon}}]\\
&\leq& C_mn^{m+2-2\epsilon}[ \frac{1}{n^{(1-\epsilon)\rho}}+\sum_{j=m+1}^{\infty}\frac{1}{j^{\frac{\rho}{2}}n^{(1-\epsilon)\rho}}+ \sum_{j=m+1}^{\infty}(j+1)e^{-2\sqrt{j}n^{1-\epsilon}}]
\end{eqnarray*}
Thus, using $ (\rho-1)(1-\epsilon)\geq m$ we obtain
$$\int_{\{(\log|\la a,u\ra|)^2>mn^{2-2\epsilon}\}}(\log|\la a,u\ra|)^2 d\probn(a) \leq C_m n^{1-\epsilon}.$$
Since, $\probn$ is a probability measure we conclude that 
$$\int_{\C^{d_n}}(\log|\la a,u\ra|)^2 d\probn(a)\leq C_mn^{2-2\epsilon} $$ 
which proves (\ref{i4}) and this completes the proof of the case $k=1$. \\ \indent
  Now, we assume that the assertion holds for $k-1.$ We denote $S^k_n=(S_n',s_k)$ where\\ $S_n'=(s_1,\dots,s_{k-1})\in \mathcal{S}_n^{k-1}.$ Then by Corollary \ref{PL} 
$$\Bbb{E}[\Z_{S^k_n}]=\Bbb{E}[\Z_{S'_{n}}]\wedge \Bbb{E}[\Z_{s^k_n}].$$ and by (\ref{sym})
\begin{eqnarray*}
\U_{\Z_{S^k_n}}(R) 
= \la \Z_{S'_n}\wedge\Z_{s^k_n}, U_R\ra
\end{eqnarray*}
where $U_R$ is a smooth quasi-potential of the smooth form $R\in\mathscr{C}_{m-k+1}$ of mean $\la U_{S^k_n},\ommk\ra.$ Note that
\begin{eqnarray*}
Var[X_n] &=& \Bbb{E}[X_n^2]-(\Bbb{E}[X_n])^2 \\
&=& \int_{\mathcal{S}_n^k}(\la \Z_{S_n},U_R\ra)^2d\mu_n^k-(\la \Bbb{E}[\Z_{S'_n}]\wedge \Bbb{E}[\Z_{s_n^k}],U_R\ra)^2
\end{eqnarray*} 
We define $J_1$ and $J_2$ such that
$$J_1+J_2:=\la \Z_{S^k_n},U_R\ra^2-\la \Bbb{E}[\Z_{S'_n}]\wedge \Bbb{E}[\Z_{s_n^k}],U_R\ra^2$$ where
$$J_1(S'_n,s^k_n)=\la \Z_{S^k_n},U_R\ra^2-\la\Z_{S'_n}\wedge \Bbb{E}[\Z_{s^k_n}],U_R\ra^2$$
$$J_2(S'_n)=\la\Z_{S'_n}\wedge \Bbb{E}[\Z_{s^k_n}],U_R\ra^2-\la \Bbb{E}[\Z_{S'_n}]\wedge \Bbb{E}[\Z_{s_n^k}],U_R\ra^2$$
which are well-defined for a.e. $S_n'$ and $s^k_n$ (see proof of Corollary \ref{PL}). Note that 
$$Var[X_n] =\Bbb{E}[J_1]+\Bbb{E}[J_2]$$ 
\\ \indent
For a generic $S_n'\in \mathcal{S}_n^{k-1}$ the set  $V:=\{S'_n=0\}$ is a complex submanifold (not necessarily homogeneous!) of codimension $k-1.$ Moreover, $\Z_{S'_n}\wedge \Z_{s^k_n}=\Z_{s^k_n}|_{V}$ for generic $s_n^k.$ Thus,  
$$\U_{\Z_{S^k_n}}(R)=\la\Z_{s^k_n}|_V,U_R|_V\ra$$
Then applying the induction hypothesis with $k=1$ to $(V,\frac{1}{\|R|_V\|}R|_V)$ and $\mu_n'$ in place of $(X,\nu)$ and $\mu_n$ where $\rho:\mathcal{S}_n\to \mathcal{S}_n'$ is the restriction map and $\rho_*\mu_n=\mu_n',$ we obtain
\begin{eqnarray*}
\int_{\mathcal{S}_n} J_1(S'_n,s^k_n)d\mu_n(s_n^k) &=& \int_{\mathcal{S}_n}\la\Z_{s_n^k}|_V,U_R|_V\ra^2 d\mu_n'(s_n^k)-\la \Bbb{E}[\Z_{s_n^k}|_V],U_R|_V\ra^2\\
&\leq& C\|R|_V\|^2n^{-\epsilon}\|dd^c(U_R|_V)\|_{\infty}\\
&\leq& C_L\|R\|^2n^{-\epsilon}\|dd^cU_R\|_{\infty}
\end{eqnarray*}
where $\|R|_V\|$ (respectively $\|R\|$) denotes the mass of restriction of $R$ on $V$ (respectively the mass of $R$ on $X$) with respect to $\omega|_{V}$ (respectively $\omega$). Thus, taking the average over $S_n'$ we obtain
$$\Bbb{E}[J_1]=\int_{\mathcal{S}_n^{k-1}}\int_{\mathcal{S}_n} J_1(S_n',s^k_n)d\mu_n(s^k_n)_n d\mu_n^{k-1}(S'_n)\leq C_{L,R}n^{-\epsilon}.$$
On the other hand, for a random variable $Y_n$ of mean $m$ we have $\Bbb{E}[(Y_n-m)^2]=\Bbb{E}[(Y_n)^2]-m^2.$ Applying this argument to the random variables $$Y_n(\{S_j'\}_{j\geq1}):=\la \Z_{S_n'}\wedge \Bbb{E}[\Z_{s_n^k}],U_R\ra$$ we obtain
\begin{eqnarray*}
\Bbb{E}[J_2] &=& \int_{\mathcal{S}_{n}^{k-1}} ( \la \Z_{S'_n}\wedge \Bbb{E}[\Z_{s^k_n}],U_R\ra^2d\mu_n^{k-1}(S'_n)-\la \Bbb{E}[\Z_{S'_n}]\wedge \Bbb{E}[\Z_{s^k_n}],U_R\ra^2\\
&=& \int_{\mathcal{S}_{n}^{k-1}} \big(\la \Z_{S'_n}\wedge \Bbb{E}[\Z_{s^k_n}]-\Bbb{E}[\Z_{S'_n}]\wedge \Bbb{E}[\Z_{s^k_n}],U_R\ra\big)^2d \mu_n^{k-1}(S'_n)\\
&=& \int_{\mathcal{S}_{n}^{k-1}} \big[\int_{\mathcal{S}_n}(\la(\Z_{S'_n}-\Bbb{E}[\Z_{S'_n}])\wedge \Z_{s^k_n},U_R\ra d\mu_n(s^k_n)\big]^2 d\mu_n^{k-1}(S'_n)\\
&\leq&  \int_{\mathcal{S}_n} \int_{\mathcal{S}_{n}^{k-1}} ( \la (\Z_{S'_n}-\Bbb{E}[\Z_{S'_n}])\wedge \Z_{s^k_n},U_R\ra^2d\mu_n^{k-1}(S'_n)d\mu_n(s^k_n)
\end{eqnarray*}
where the last inequality follows from Jensen's inequality and Fubini's theorem. Now, letting $W:=\{s^k_n=0\}$ since $W$ is a smooth hypersurface for generic $s_n^k$ we have
$$\la(\Z_{S'_n}-\Bbb{E}[\Z_{S'_n}])\wedge \Z_{s^k_n}, U_R\ra= \la (Z_{S'_n}-\Bbb{E}[\Z_{S'_n}])|_W,U_R|_W\ra$$
and applying the induction hypothesis for $k-1$ on $W$ we obtain that 
$$\Bbb{E}[J_2]\leq C_{L,R}n^{-\epsilon}\|dd^cU_R\|_{\infty}$$
and this finishes the proof.
\end{proof}

 \bibliographystyle{plain}
\bibliography{5910}
\end{document}